\newtheorem{thm}{Theorem}[section]
\newtheorem{lem}[thm]{Lemma}
\newtheorem{coro}[thm]{Corollary}
\newtheorem*{SE}{Schoenberg-Edrei Theorem}
\newtheorem*{ASW}{Aissen-Schoenberg-Whitney Theorem}
\theoremstyle{definition}
\newtheorem{exm}[thm]{Example}
\newtheorem{rem}[thm]{Remark}
\newcommand{\lrf}[1]{\lfloor #1\rfloor}
\def\sp{\preccurlyeq}
\def\ssp{\prec}
\def\b{\beta}
\def\c{\gamma}
\def\d{\delta}
\def\R{\mathcal{R}}
\def\sgn{\mathrm{sgn\,}}
\def\n{NICE}
\def\Tt{\mathscr{T}(b,c;x)}
\def\Mx{\mathscr{M}(b,c;x)}
\def\Gx{\mathscr{T}_n(x)}
\def\Gxz{\mathscr{T}_{n-1}(x)}
\def\Gxf{\mathscr{T}_{n+1}(x)}
\def\Gxs{\mathscr{T}_{n+2}(x)}
\def\gx{\mathscr{T}_n}
\def\gxz{\mathscr{T}_{n-1}}
\def\gxf{\mathscr{T}_{n+1}}
\def\AX{\mathscr{A}_n}
\def\Tbc{T^+(b,c)}
\def\Mbc{M(b,c)}
\def\TU{T}
\def\RM{R}
\def\HM{H}
\def\DM{D}
\def\PM{P_a}
\def\JM{J}
\def\AM{A}
\def\BM{B}
\def\LM{L}
\numberwithin{equation}{section}
\journal{JMAA}
\begin{document}

\begin{frontmatter}

\title{Analytic aspects of generalized central trinomial coefficients}
\author[a,b,c]{Huyile Liang\corref{cor1}}
\ead{lianghuyile@imnu.edu.cn}
\author[d]{Yaling Wang\corref{cor2}}
\ead{wang-yaling@hotmail.com}
\author[d]{Yi Wang\corref{cor3}}
\ead{wangyi@dlut.edu.cn}
\cortext[cor3]{Corresponding author.}
\address[a]{College of Mathematics Science,
Inner Mongolia Normal University, Hohhot 010022, P.R. China}
\address[b]{Center for Applied Mathematics, Inner Mongolia, Hohhot 010022, P.R. China}
\address[c]{Key Laboratory of Infinite-dimensional Hamiltonian System and Its Algorithm Application,
Ministry of Education, Hohhot 010022, P.R. China}
\address[d]{School of Mathematical Sciences, Dalian University of Technology, Dalian 116024, P.R. China}
\date{}

\begin{abstract}
The divisibility and congruence of
usual and generalized central trinomial coefficients
have been extensively investigated.
The present paper is devoted to analytic properties of these numbers.
We show that usual central trinomial polynomials $\Gx$ have only real roots,
and roots of $\Gx$ interlace those of $\Gxf$,
as well as those of $\Gxs$,
which gives an affirmative answer to an open question of Fisk.
We establish necessary and sufficient conditions such that the generalized central trinomial coefficients $T_n(b,c)$
form a log-convex sequence or a Stieltjes moment sequence.
\end{abstract}

\begin{keyword}
central trinomial coefficient\sep generalized central trinomial coefficient\sep recursive matrix\sep Riordan array
\MSC[2010] 26C10\sep 60F05\sep 05A15\sep 15B48
\end{keyword}

\end{frontmatter}

\section{Introduction}

The central trinomial coefficient $T_n$
is the coefficient of $x^n$ in the expansion $(x^2+x+1)^n$.
By the trinomial theorem,
it is clear that
$T_n=\sum_{k=0}^{\lrf{n/2}}T(n,k)$,
where
\begin{equation}\label{t-nk}
 T(n,k)=\frac{n!}{k!k!(n-2k)!}=\binom{n}{2k}\binom{2k}{k}.
\end{equation}
Although the study of central trinomial coefficients can be traced back to Euler's work,
there are occasional references to them in combinatorics books until Andrews \cite{And90}.
In recent years,
there has been a lot of research work devoted to the study of the central trinomial coefficients
and their generalizations.
For example,
the generalized central trinomial coefficient $T_n(b,c)$
is defined as the coefficient of $x^n$ in the expansion $(x^2+bx+c)^n$.
Clearly, $T_n(1,1)=T_n$.
It is known \cite{GKP,Noe06,Wilf} that
\begin{equation}\label{tnbc}
 T_n(b,c)=\sum_{k=0}^{\lrf{n/2}}T(n,k)b^{n-2k}c^k.
\end{equation}
Also, the generalized central trinomial coefficients have the generating function
\begin{equation}\label{tnbc-gf}
\Tt=\sum_{n\ge 0}T_n(b,c)x^n=\frac{1}{\sqrt{1-2bx+(b^2-4c)x^2}},
\end{equation}
and satisfy the recurrence relation
\begin{equation}\label{tnbc-rr}
(n+1)T_{n+1}(b,c)=(2n+1)bT_n(b,c)-n(b^2-4c)T_{n-1}(b,c)
\end{equation}
with $T_0(b,c)=1$ and $T_1(b,c)=b$.

The generalized central trinomial coefficients
are generalizations of many well-known combinatorial numbers \cite{Noe06}.
For example,
$T_n(2,1)$ is the central binomial coefficient $\binom{2n}{n}$ and
$T_n(3,2)$ is the central Delannoy number $D_n=\sum_{k=0}^n\binom{n}{k}\binom{n+k}{k}$.
On the other hand,
$T_n(1,x)$ is the central trinomial polynomial,
$T_n(x+1,x)$ is the Narayana polynomial
of type B,
$T_n(2x+1,x(x+1))$ is the Delannoy polynomial
and $T_n(x,(x^2-1)/4)$ is the Legendre polynomial.

There have been quite a few papers concerned with the divisibility and congruence of
generalized central trinomial coefficients \cite{CW22,Noe06,Sun14,Sun22}.
The objective of the present paper is to investigate analytic properties of
usual and generalized central trinomial coefficients.

The paper is organized as follows.
In the next section,
we show that the central trinomial polynomials $\Gx=\sum_{k=0}^{\lrf{n/2}}T(n,k)x^k$
have only real roots,
and roots of $\Gx$ interlace those of $\Gxf$,
as well as those of $\Gxs$,
which gives an affirmative answer to an open question of Fisk.
We also show that the matrix $\TU=[T(n,k)]_{n,k\ge 0}$ is totally positive
and the numbers $T(n,k)$ are asymptotically normal (by central and local limit theorems).
In \S 3,
we investigate the generalized central trinomial coefficients $T_n(b,c)$
by setting them in a broader context (the leftmost column of the matrix $\Tbc$).
We show that the matrix $\Tbc$ is both a Riordan array and a recursive matrix of Aigner.
As applications,
we give the binomial and Hankel transforms of $(T_n(b,c))_{n\ge 0}$,
and establish necessary and sufficient conditions such that $(T_n(b,c))_{n\ge 0}$ form a log-convex sequence or a Stieltjes moment sequence.
Furthermore, we show that
$T_{n-1}(b,c)T_{n+1}(b,c)-T_{n}^2(b,c)$
are polynomials in $c$ and $b^2-2c$
with nonnegative integer coefficients for $n\ge 1$.
Finally in \S 4,
we point out that the generalized Motzkin numbers are closely related to the generalized central trinomial coefficients
and they possess many similar properties.

\section{Usual central trinomial coefficients}

In this section, we investigate analytic properties of the usual central trinomial coefficients $T_n$.
Define the matrix $\TU=[T(n,k)]_{n,k\ge 0}$.
Then
\begin{equation}\label{tnk-m}
\TU=
\left(
  \begin{array}{cccc}
    1 &  &  & \\
    1 &  &  & \\
    1 & 2 &  & \\
    1 & 6 &  & \\
    1 & 12 & 6 & \\
    1 & 20 & 30 & \\
    \vdots &  &  &\ddots\\
  \end{array}
\right).
\end{equation}
Let $\Gx=\sum_{k=0}^{\lrf{n/2}}T(n,k)x^k$ be the $n$th row generating function of $\TU$.
Then $\deg \Gx=\lrf{n/2}$,
$\Gx=T_n(1,x)$ and $T_n=\sum_kT(n,k)=\gx(1)$.
It is clear from \eqref{tnbc-rr} that
\begin{equation}\label{tx-rr}
(n+1)\Gxf=(2n+1)\Gx+n(4x-1)\Gxz
\end{equation}
and
\begin{equation}\label{t-rr}
  (n+1)T_{n+1}=(2n+1)T_n+3nT_{n-1}.
\end{equation}

Let $f(x)$ be a {\it real-rooted} polynomial,
i.e., a real polynomial with only real roots.
Denote by $r_i(f)$ the roots of $f$ sorted in non-increasing order:
$r_1(f)\ge r_2(f)\ge r_3(f)\ge\cdots$.
Let $f,g$ be two real-rooted polynomials and $\deg g\le\deg f\le\deg g+1$.
We say that $g$ {\it interlaces} $f$, denoted by $g\sp f$,
if
\begin{equation}\label{int}
r_{1}(f)\ge r_{1}(g)\ge r_{2}(f)\ge r_2(g)\ge\cdots.
\end{equation}
If all inequalities in \eqref{int} are strict, then we say that $g$ {\it strictly interlaces} $f$
and denote it by $g\ssp f$.
For notational convenience,
we say that
a real number $a$ is real-rooted and $a\ssp bx+c$ for $a,b+c>0$ and $b,c\ge 0$.

It is known that $\Gx$ is real-rooted (see \cite[Corollary 13.15]{Fisk} for instance).
Fisk posed a question that roots of $\Gx$ interlace those of $\Gxf$,
as well as those of $\Gxs$
\cite[Question 4, P. 721]{Fisk}.
We next give an affirmative answer of this question.
For completeness, we also include a proof of the real-rootedness of $\Gx$.

\begin{thm}\label{Trr}
The polynomials $\Gx=\sum_{k=0}^{\lrf{n/2}}T(n,k)x^k$
are real-rooted.
Furthermore,
$\Gxz\ssp \Gxf$ for $n\ge 1$ and $\Gx\ssp \Gxf$ for $n\ge 0$.
\end{thm}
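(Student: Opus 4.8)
The plan is to run a single induction on $n$ with a strengthened hypothesis: that $\gxz$ and $\gx$ are real-rooted, have all of their roots in $(-\infty,0)$, and satisfy $\gxz\ssp\gx$. From this we shall deduce the same properties for the pair $\gx,\gxf$, together with the two interlacings $\gx\ssp\gxf$ and $\gxz\ssp\gxf$. A fact used throughout is that $\gx(0)=T(n,0)=1>0$ for every $n$. The initial cases are immediate: $\mathscr{T}_0=\mathscr{T}_1=1$ and, by \eqref{tx-rr}, $\mathscr{T}_2=1+2x$, while the interlacings among $\mathscr{T}_0,\mathscr{T}_1,\mathscr{T}_2$ (in particular $\mathscr{T}_0\ssp\mathscr{T}_2$) hold by the stated convention that a positive real $a$ satisfies $a\ssp bx+c$ whenever $b,c\ge 0$ and $b+c>0$. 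The degree conditions needed for the symbol $\ssp$ to be meaningful are automatic, since $\deg\gxf-\deg\gxz=1$ and $\deg\gxf-\deg\gx\in\{0,1\}$.

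For the inductive step, take $n\ge 2$, so that $d:=\deg\gx=\lrf{n/2}\ge 1$. Write the roots of $\gx$ as $0>u_1>\cdots>u_d$ and those of $\gxz$ as $0>v_1>\cdots>v_e$ with $e=\lrf{(n-1)/2}$, so that $u_1>v_1>u_2>v_2>\cdots$ by the inductive interlacing (in particular $\gx$ and $\gxz$ have no common root). The engine is a sign count for $\gxf$ using the recurrence \eqref{tx-rr}. Evaluating at a root $u_i$ of $\gx$ kills the first term, leaving $(n+1)\gxf(u_i)=n(4u_i-1)\gxz(u_i)$; since $u_i<0$, and since the interlacing together with the positivity of the leading coefficient of $\gxz$ forces $\sgn\gxz(u_i)=(-1)^{i-1}$, we obtain $\sgn\gxf(u_i)=(-1)^{i}$. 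Likewise, evaluating at a root $v_j$ of $\gxz$ kills the second term, giving $\sgn\gxf(v_j)=\sgn\gx(v_j)=(-1)^{j}$. The degree of $\gxf$, read off from \eqref{tx-rr}, is $\lrf{(n+1)/2}$, and its leading coefficient is positive since it is a positive combination of the leading coefficients of $\gx$ and of $x\gxz$ (no cancellation). Combining the alternation $\sgn\gxf(u_i)=(-1)^i$ with $\gxf(0)=1>0$ and with the known sign of $\gxf$ at $-\infty$ locates exactly $\lrf{(n+1)/2}$ simple real roots of $\gxf$, all negative: one in $(u_1,0)$, one in each $(u_{i+1},u_i)$, and --- precisely when $\deg\gxf=d+1$ --- one further root in $(-\infty,u_d)$. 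Since this exhausts all roots, $\gxf$ is real-rooted with all roots in $(-\infty,0)$, and the positions of these roots relative to the $u_i$ yield $\gx\ssp\gxf$, closing the induction. Re-running the same count with the values $\sgn\gxf(v_j)=(-1)^j$ yields $\deg\gxf$ simple roots of $\gxf$ --- one in $(v_1,0)$, one in each $(v_{j+1},v_j)$, and one in $(-\infty,v_e)$ --- whose positions relative to the $v_j$ give $\gxz\ssp\gxf$; for $n=1$ this reduces to $\mathscr{T}_0\ssp\mathscr{T}_2$, already covered by the convention.

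The one point requiring care is the bookkeeping caused by the fact that $\deg\gx$ goes up by one only on alternate steps: one must separate the cases according to the parity of $n$ to decide whether $\deg\gxf$ equals $d$ or $d+1$, and in each case check that the roots produced by the sign alternations exhaust $\gxf$ and all lie strictly to the left of the origin --- the evaluation $\gxf(0)=1>0$ being exactly what pins the rightmost root on the negative axis, as the inductive hypothesis demands. Everything else is the routine Sturm-type analysis of a three-term recurrence.
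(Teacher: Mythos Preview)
Your proof is correct and follows essentially the same Sturm-type argument as the paper: evaluate the three-term recurrence \eqref{tx-rr} at the roots of $\gx$ (respectively $\gxz$) and read off the sign alternation of $\gxf$ to locate its roots. Your version is in fact slightly more careful than the paper's, since you explicitly carry the hypothesis that all roots are negative (which is what makes $4u_i-1<0$) and you spell out the degree-parity bookkeeping and the role of $\gxf(0)=1$, whereas the paper leaves these points implicit.
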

\begin{proof}
We first show that all polynomials $\Gx$ are real-rooted and $\Gx\ssp \Gxf$.
We proceed by induction on $n$.
The case is clear for $n\le 2$.
Assume that $\Gxz,\Gx$ are real-rooted and $\Gxz\ssp \Gx$.
We need to show that $\Gxf$ is real-rooted and $\Gx\ssp \Gxf$.

Denote by $\sgn (t)$ the sign of a real number $t$, i.e.,
$$\sgn(t)=
\left\{
  \begin{array}{rl}
    +1, & \hbox{for $t>0$;} \\
    0, & \hbox{for $t=0$;} \\
    -1, & \hbox{for $t<0$.}
  \end{array}
\right.$$
Let $r_{1}^{(n)}>r_{2}^{(n)}>r_{3}^{(n)}>\cdots$ be roots of $\Gx$.
By $\Gxz\ssp \Gx$, we have
$$\sgn \gxz(r_i^{(n)})=(-1)^{i-1},\quad i=1,2,\ldots,\lrf{n/2}.$$
By substituting $x=r_i^{(n)}$ into the equation \eqref{tx-rr}, we obtain
$$\sgn \gxf(r_i^{(n)})=-\sgn \gxz(r_i^{(n)})=(-1)^{i},\quad i=1,2,\ldots,\lrf{n/2}.$$
It follows that $\Gxf$ is real-rooted and $\Gx\ssp \Gxf$,
as required.

We then show that $\Gxz\ssp \Gxf$.
It suffices to consider the case $n\ge 3$.
Now $\Gxz\ssp \Gx$, which implies that
$$\sgn \gx(r_i^{(n-1)})=(-1)^i,\quad i=1,2,\ldots,\lrf{(n-1)/2}.$$
Again by \eqref{tx-rr},
$$\sgn \gxf(r_i^{(n-1)})=\sgn \gx(r_i^{(n-1)})=(-1)^{i},\quad i=1,2,\ldots,\lrf{(n-1)/2}.$$
Thus $\Gxz\ssp \Gxf$.
This completes the proof of the theorem.
\end{proof}

By the same argument as in the proof of Theorem \ref{Trr},
many well-known combinatorial polynomials can be shown to be real-rooted,
including
the Narayana polynomials NB$_n(x)=T_n(x+1,x)$ of type B,
the Delannoy polynomials $D_n(x)=T_n(2x+1,x(x+1))$,
and the Legendre polynomials $P_n(x)=T_n(x,(x^2-1)/4)$.
We refer the reader to \cite{CWZ20,LPW23,LW-rz,WZC19}.

The real-rootedness of polynomials with nonnegative coefficients
is closely related to the total positivity of matrices \cite{FJ11,Fisk,Pin10,Kar68}.
Following Karlin \cite{Kar68},
we say that a (finite or infinite) matrix $\AM$ is {\it totally positive of order $r$}
(TP$_r$ for short),
if its minors of all orders $\le r$ are nonnegative.
The matrix is called {\it totally positive} (TP for short) if its minors of all orders are nonnegative.
Let $(a_k)_{k\ge 0}$ be a (finite or infinite) sequence of nonnegative numbers
(we identify a finite sequence $a_0,a_1,\ldots,a_n$ with the infinite sequence $a_0,a_1,\ldots,a_n,0,0,\ldots$).
We say that the sequence is a {\it P\'olya frequency} (PF for short) sequence
if the corresponding infinite Toeplitz matrix
$$[a_{n-k}]_{n,k\ge 0}
  =\left(
  \begin{array}{ccccc}
    a_0 & 0 & 0 & 0 & \cdots \\
    a_1 & a_0 & 0 & 0 &  \\
    a_2 & a_1 & a_0 & 0 &  \\
    a_3 & a_2 & a_1 & a_0 &  \\
    \vdots &  &  &  & \ddots \\
  \end{array}
\right)$$
is TP.
The following is a fundamental characterization for PF sequences (see~\cite[p. 412]{Kar68} for instance).

\begin{SE}
A sequence $(a_k)_{k\ge 0}$ of nonnegative numbers is PF if and only if its generating function has the form
\begin{equation*}\label{SE-fps}
  \sum_{k\ge 0}a_kx^k=ax^me^{\gamma x}\frac{\prod_{j\ge0}(1+\alpha_j x)}{\prod_{j\ge0} (1-\beta_j x)},
\end{equation*}
where $a>0, m\in\mathbb{N}, \alpha_j, \beta_j, \gamma\ge 0$
and $\sum_{j\ge0} (\alpha_j+\beta_j)<+\infty$.
\end{SE}

The following special case of Schoenberg-Edrei Theorem
establishes the link between finite PF sequences and real-rooted polynomials.

\begin{ASW}
A finite sequence $(a_0,a_1,\ldots,a_n)$ of nonnegative numbers
is PF if and only if its generating function $a_0+a_1x+\cdots+a_nx^n$ is real-rooted.
\end{ASW}

\begin{thm}
The matrix $\TU=[T(n,k)]_{n,k\ge 0}$ is totally positive.
\end{thm}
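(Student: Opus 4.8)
The plan is to realize $\TU$ as a product of two matrices, each totally positive for an elementary reason. By \eqref{t-nk} one has $T(n,k)=\binom{n}{2k}\binom{2k}{k}$, so
\begin{equation*}
\TU=\left[\binom{n}{2k}\right]_{n,k\ge0}\cdot\mathrm{diag}\!\left(\binom{2k}{k}\right)_{k\ge0}.
\end{equation*}
Write $P'=[\binom{n}{2k}]_{n,k\ge0}$ and $D=\mathrm{diag}\big(\binom{2k}{k}\big)_{k\ge0}$. Right multiplication by the positive diagonal matrix $D$ merely rescales the columns of $P'$, so every minor of $\TU$ equals the corresponding minor of $P'$ times a product of (positive) diagonal entries of $D$; hence it suffices to prove that $P'$ is totally positive.

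Next, $P'$ is exactly the submatrix of the Pascal matrix $P=[\binom{n}{k}]_{n,k\ge0}$ obtained by keeping the columns of even index, since the $k$th column of $P'$ is the $(2k)$th column of $P$. A submatrix of a totally positive matrix is totally positive, so the statement reduces to the total positivity of $P$, which is classical (see \cite{Kar68}). For completeness one may recall the reason: $\binom{n}{k}$ counts the monotone lattice paths between two appropriate lattice points, and by the Lindström--Gessel--Viennot lemma an $r\times r$ minor of $P$ with increasing row and column indices counts the families of $r$ pairwise non-intersecting such paths, which is a nonnegative integer. Combining the steps gives that $\TU=P'D$ is totally positive.

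The whole argument is concentrated in the classical fact that the binomial matrix $P$ is totally positive; passing from $P$ to $P'$ (deleting the odd-indexed columns) and from $P'$ to $\TU$ (scaling the columns by positive constants) are routine closure properties of total positivity, so I anticipate no genuine obstacle beyond correctly tracking minors under column deletion and diagonal scaling. One could alternatively begin from Theorem~\ref{Trr} together with the Aissen--Schoenberg--Whitney theorem to see that each row of $\TU$ is a P\'olya frequency sequence, but this only yields total positivity of the associated Toeplitz matrices rather than of $\TU$ itself, so a structural factorization such as $\TU=P'D$ (or an equivalent decomposition into elementary bidiagonal totally positive matrices) still appears to be needed.
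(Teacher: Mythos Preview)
Your proof is correct and follows essentially the same approach as the paper: both arguments extract the even-indexed columns from a known totally positive matrix and then rescale rows/columns by positive constants. The only cosmetic difference is the choice of starting matrix---the paper begins with the Toeplitz matrix $[1/(n-k)!]_{n,k\ge 0}$ (totally positive because $(1/k!)_{k\ge 0}$ is PF via Schoenberg--Edrei), while you begin with the Pascal matrix $[\binom{n}{k}]_{n,k\ge 0}$; since these two matrices differ only by row and column scaling, the two proofs are equivalent.
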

\begin{proof}
Clearly,
the sequence $\left(1/k!\right)_{k\ge 0}$ is PF
by Schoenberg-Edrei Theorem.
Thus the the corresponding Toeplitz matrix $\left[1/(n-k)!\right]_{n,k\ge 0}$ is TP,
and so is the submatrix $\left[1/(n-2k)!\right]_{n,k\ge 0}$ consisting of even columns of this Toeplitz matrix.
It is obvious that
if the matrix $[a_{n,k}]_{n,k\ge 0}$ is TP, two sequences $b_n$ and $c_k$ are positive,
then the matrix $[a_{n,k}b_nc_k]_{n,k\ge 0}$ is still TP.
The matrix
$$\TU=\left[\frac{n!}{k!k!(n-2k)!}\right]_{n,k\ge 0}$$
is therefore TP.
\end{proof}

We next consider the asymptotic normality of the numbers $T(n,k)$.
Let $a(n,k)$ be a double-indexed sequence of nonnegative numbers and let
\begin{equation*}\label{pnk}
p(n,k)=\frac{a(n,k)}{\sum_{j=0}^na(n,j)}
\end{equation*}
denote the probabilities.
Following Bender~\cite{Ben73},
we say that the sequence $a(n,k)$ is {\it asymptotically normal by a central limit theorem},
if
\begin{equation}\label{clt}
\lim_{n\rightarrow\infty}\sup_{x\in\mathbb{R}}\left|\sum_{k\le\mu_n+x\sigma_n}p(n,k)-\frac{1}{\sqrt{2\pi}}\int_{-\infty}^xe^{-t^2/2}dt\right|=0,
\end{equation}
where $\mu_n$ and $\sigma^2_n$ are the mean and variance of $p(n,k)$, respectively.
We say that $a(n,k)$ is {\it asymptotically normal by a local limit theorem} on $\mathbb{R}$ if
\begin{equation}\label{llt}
\lim_{n\rightarrow\infty}\sup_{x\in\mathbb{R}}\left|\sigma_np(n,\lfloor\mu_n+x\sigma_n\rfloor)-\frac{1}{\sqrt{2\pi}}e^{-x^2/2}\right|=0.
\end{equation}
In this case,
\begin{equation}\label{asy}
a(n,k)\sim \frac{e^{-x^2/2}\sum_{j=0}^na(n,j)}{\sigma_n\sqrt{2\pi}} \textrm{ as } n\rightarrow \infty,
\end{equation}
where $k=\mu_n+x\sigma_n$ and $x=O(1)$.
Clearly, the validity of \eqref{llt} implies that of \eqref{clt}.

Many well-known combinatorial sequences enjoy central and local limit theorems.
For example, the famous de Moivre-Laplace theorem states that
the binomial coefficients $\binom{n}{k}$
are asymptotically normal (by central and local limit theorems).
Other examples include
the signless Stirling numbers $c(n,k)$ of the first kind,
the Stirling numbers $S(n,k)$ of the second kind,
and the Eulerian numbers $A(n,k)$(see \cite{Can15} for instance).
A standard approach to demonstrating asymptotic normality is the following criterion
(see \cite[Theorem 2]{Ben73} for instance).

\begin{lem}\label{lem-rzv}
Suppose that $\AX(x)=\sum_{k=0}^na(n,k)x^k$ have only real roots,
where all $a(n,k)$ are nonnegative.
Let
\begin{equation}\label{mu-f}
\mu_n=\frac{\AX'(1)}{\AX(1)}
\end{equation}
and
\begin{equation}\label{sg-f}
\sigma^2_n=\frac{\AX''(1)}{\AX(1)}+\mu_n-\mu_n^2.
\end{equation}
Then if $\sigma_n^2\rightarrow+\infty$,
the numbers $a(n,k)$ are asymptotically normal (by central and local limit theorems)
with the mean $\mu_n$ and variance $\sigma_n^2$.
\end{lem}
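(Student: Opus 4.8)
The plan is to realise $p(n,\cdot)$ as the law of a sum of independent Bernoulli variables and then invoke the classical Lindeberg--Feller and local limit theorems for such sums, which is the route of Bender~\cite{Ben73}. First I would reformulate probabilistically. Discarding the trivial case in which all $a(n,k)$ vanish, every root of $\AX(x)$ is real, and since the $a(n,k)$ are nonnegative every root is nonpositive; hence there are $c>0$, an integer $l_n\ge 0$ and reals $r_1,\dots,r_{d_n}>0$ with $\AX(x)=c\,x^{l_n}\prod_{j=1}^{d_n}(x+r_j)$, where $d_n=\deg\AX-l_n$. Dividing by $\AX(1)$ shows that
\[
P_n(x):=\frac{\AX(x)}{\AX(1)}=x^{l_n}\prod_{j=1}^{d_n}\frac{x+r_j}{1+r_j}
\]
is the probability generating function of $X_n=l_n+\sum_{j=1}^{d_n}Y_j$, the $Y_j$ being independent with $\Pr(Y_j=1)=p_j:=1/(1+r_j)\in(0,1)$; in particular $\Pr(X_n=k)=a(n,k)/\AX(1)=p(n,k)$. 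Differentiating $P_n$ at $x=1$ and comparing with \eqref{mu-f} and \eqref{sg-f} gives $\mathbb{E}X_n=\mu_n$ and $\operatorname{Var}X_n=\sum_{j}p_j(1-p_j)=\sigma_n^2$, so $\mu_n$ and $\sigma_n^2$ really are the mean and variance of $p(n,\cdot)$, and the hypothesis $\sigma_n^2\to\infty$ forces $d_n\ge 1$ for large $n$.

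For the central limit theorem, the summands $Y_j-p_j$ form a triangular array bounded by $1$ in absolute value, so for any $\varepsilon>0$ the truncated moments $\sigma_n^{-2}\sum_j\mathbb{E}\bigl[(Y_j-p_j)^2\mathbf{1}\{|Y_j-p_j|>\varepsilon\sigma_n\}\bigr]$ vanish once $\sigma_n>1/\varepsilon$. Hence the Lindeberg condition holds whenever $\sigma_n\to\infty$, and the Lindeberg--Feller theorem gives $(X_n-\mu_n)/\sigma_n\Rightarrow N(0,1)$, which is exactly \eqref{clt}.

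For the local limit theorem I would use Fourier inversion on the integer lattice: for every $m\in\Z$,
\[
p(n,m)=\frac1{2\pi}\int_{-\pi}^{\pi}\mathbb{E}\,e^{\mathrm{i}t(X_n-m)}\,dt ,
\]
so, with $\varphi_n(t)=\mathbb{E}\,e^{\mathrm{i}t(X_n-\mu_n)}$ and $t=s/\sigma_n$,
\[
\sigma_n\,p(n,m)=\frac1{2\pi}\int_{-\pi\sigma_n}^{\pi\sigma_n}e^{-\mathrm{i}s(m-\mu_n)/\sigma_n}\,\varphi_n\!\bigl(s/\sigma_n\bigr)\,ds .
\]
The key estimate is $|q_j+p_je^{\mathrm{i}t}|^2=1-2p_j(1-p_j)(1-\cos t)\le e^{-2p_j(1-p_j)(1-\cos t)}$, whence $|\varphi_n(t)|\le e^{-\sigma_n^2(1-\cos t)}$; combined with $1-\cos t\ge\frac{2}{\pi^2}t^2$ on $[-\pi,\pi]$ this dominates the integrand above by the fixed Gaussian $e^{-2s^2/\pi^2}$, uniformly in $n$ and $m$, over the entire range $|s|\le\pi\sigma_n$. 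Since $\varphi_n(s/\sigma_n)\to e^{-s^2/2}$ uniformly on compact $s$-sets (from the characteristic-function convergence behind Step 2), splitting the integral at $|s|=A$, letting $n\to\infty$ and then $A\to\infty$ yields, uniformly in $m$,
\[
\sigma_n\,p(n,m)=\frac1{2\pi}\int_{\mathbb{R}}e^{-\mathrm{i}s(m-\mu_n)/\sigma_n}e^{-s^2/2}\,ds+o(1)=\frac1{\sqrt{2\pi}}\,e^{-(m-\mu_n)^2/(2\sigma_n^2)}+o(1).
\]
Taking $m=\lrf{\mu_n+x\sigma_n}$, so $(m-\mu_n)/\sigma_n=x+O(1/\sigma_n)$, gives \eqref{llt} for $x$ in compact sets, while the uniform Gaussian bound handles $|x|$ large by a routine truncation; \eqref{asy} is then a restatement of \eqref{llt}.

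The reformulation and the central limit part are routine. The main obstacle is the local limit theorem: establishing the Gaussian domination $|\varphi_n(s/\sigma_n)|\le e^{-2s^2/\pi^2}$ over the \emph{whole} interval $|s|\le\pi\sigma_n$ — this is precisely where the absence of a lattice periodicity, guaranteed by the factorisation with all $r_j>0$, is needed — and then upgrading the pointwise convergence of $\varphi_n(s/\sigma_n)$ to the uniformity in $x\in\mathbb{R}$ demanded by \eqref{llt}.
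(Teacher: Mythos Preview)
The paper does not prove this lemma at all: it is stated as a known criterion with a citation to Bender~\cite[Theorem~2]{Ben73}, and no argument is given. Your proposal is therefore not competing with any proof in the paper; rather, you are supplying a correct sketch of the standard Bender argument --- factoring $\AX$ over its nonpositive roots to exhibit $p(n,\cdot)$ as the distribution of a sum of independent Bernoulli variables, then invoking Lindeberg--Feller for the CLT and a Fourier-inversion/characteristic-function bound for the LLT. The sketch is sound and is precisely the content behind the paper's citation.
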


\begin{thm}\label{an}
The numbers $T(n,k)$ are asymptotically normal.
\end{thm}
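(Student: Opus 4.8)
The plan is to deduce this from Lemma~\ref{lem-rzv}. By Theorem~\ref{Trr} the polynomials $\Gx$ are real-rooted, and their coefficients $T(n,k)$ are nonnegative, so Lemma~\ref{lem-rzv} applies with $\AX(x)=\Gx$; hence it remains only to check that the associated variance $\sigma_n^2$ tends to $+\infty$. Writing $\mu_n=\gx'(1)/\gx(1)$ and $\sigma_n^2=\gx''(1)/\gx(1)+\mu_n-\mu_n^2$ (with $\gx(1)=T_n$, $\gx'(1)=\sum_k kT(n,k)$, $\gx''(1)=\sum_k k(k-1)T(n,k)$), the task reduces to estimating these three moments.

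For this I would use the generating function. Setting $b=1$ in~\eqref{tnbc-gf} and keeping $c$ as a free variable, $\sum_{n\ge0}\gx(c)x^n=(1-2x+(1-4c)x^2)^{-1/2}$; since $\partial_c(1-2x+(1-4c)x^2)=-4x^2$, differentiating in $c$ once and twice and then putting $c=1$ (and using $1-2x-3x^2=(1-3x)(1+x)$) gives
$$\sum_{n\ge0}\gx(1)x^n=\frac{1}{(1-3x)^{1/2}(1+x)^{1/2}},\qquad \sum_{n\ge0}\gx'(1)x^n=\frac{2x^2}{(1-3x)^{3/2}(1+x)^{3/2}},\qquad \sum_{n\ge0}\gx''(1)x^n=\frac{12x^4}{(1-3x)^{5/2}(1+x)^{5/2}}.$$
Each series has $x=1/3$ as its unique dominant singularity (the only other singularity being at $x=-1$), so the Flajolet--Odlyzko transfer theorem supplies asymptotic expansions $\gx(1)=3^n n^{-1/2}(\kappa_0+\kappa_1/n+\cdots)$, $\gx'(1)=3^n n^{1/2}(\cdots)$, $\gx''(1)=3^n n^{3/2}(\cdots)$. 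The leading terms already give $\mu_n\sim n/3$ and $\gx''(1)/\gx(1)\sim n^2/9$, so the $n^2$ terms cancel in $\sigma_n^2$ — and this cancellation is exactly where the work lies.

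To resolve it I would push each expansion to second order: writing $(1-3x)^{-\alpha}g(x)$ with $g$ analytic at $1/3$ as $g(1/3)(1-3x)^{-\alpha}-\tfrac13 g'(1/3)(1-3x)^{1-\alpha}+O((1-3x)^{2-\alpha})$ and using $[x^n](1-3x)^{-\beta}=3^n\Gamma(n+\beta)/(\Gamma(\beta)n!)$, a routine computation yields $\mu_n=\tfrac n3-\tfrac1{12}+O(1/n)$ and $\gx''(1)/\gx(1)=\tfrac{n^2}{9}-\tfrac n3+O(1)$, hence $\sigma_n^2=\tfrac n{18}+O(1)\to+\infty$. Lemma~\ref{lem-rzv} then gives asymptotic normality by central and local limit theorems, with $\mu_n\sim n/3$ and $\sigma_n^2\sim n/18$.

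An alternative, perhaps cleaner, route is to invoke the quasi-power form of the central limit theorem: the denominator of $\sum_n\gx(c)x^n$ factors as $(1-x/\rho(c))(1-x/\overline\rho(c))$ with dominant root $\rho(c)=(1+2\sqrt c)^{-1}$, so $\gx(c)\sim\lambda(c)n^{-1/2}\rho(c)^{-n}$ uniformly for $c$ near $1$, the probability generating function $\gx(e^s)/\gx(1)$ behaves like a constant times $\bigl((1+2e^{s/2})/3\bigr)^n$, and two differentiations at $s=0$ recover $\sigma_n^2\sim n/18$. In either approach the only genuine obstacle is the leading-order cancellation noted above: since $\mu_n\sim n/3$ is produced by $\lrf{n/2}$ Bernoulli factors, no first-order estimate can rule out $\sigma_n^2$ staying bounded, so a second-order analysis is unavoidable.
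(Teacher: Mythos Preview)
Your approach is correct, but it differs substantially from the paper's. Both proofs invoke Lemma~\ref{lem-rzv} and must overcome the same obstacle---the leading $n^2$ terms cancel in $\sigma_n^2$---but they handle the moment computations very differently.

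The paper avoids analytic machinery entirely. It observes the closed-form identities
\[
\gx'(1)=\tfrac{n}{2}(T_n-T_{n-1}),\qquad \gx''(1)+\gx'(1)=n(n-1)T_{n-2},
\]
proved by elementary binomial manipulations, and then feeds these into \eqref{sg-f}. The resulting expression for $\sigma_n^2$ is bounded below using the log-convexity $T_{n-1}^2\le T_{n-2}T_n$ (cited from \cite{WZ14}) together with the three-term recurrence \eqref{t-rr}, yielding $\sigma_n^2\ge \tfrac{n(T_{n-1}-T_{n-2})}{4T_n}=\tfrac{n}{18}+o(n)$. No singularity analysis or transfer theorem is needed; the only external ingredient is the limit $T_{n-1}/T_n\to 1/3$, which follows from log-convexity and the recurrence.

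Your route---differentiating the bivariate generating function in $c$ and applying the Flajolet--Odlyzko transfer theorem to each of the three resulting series---is perfectly valid and in some ways sharper: it gives the exact asymptotic $\sigma_n^2=\tfrac{n}{18}+O(1)$ rather than just a lower bound, and the quasi-power variant you sketch would generalise immediately to $T_n(b,c)$ for other parameters. The trade-off is that it imports heavier tools (analytic continuation, $\Delta$-domains, second-order transfer), whereas the paper's argument stays within elementary combinatorics and linear recurrences already developed in the article. The part you label ``routine computation'' for the second-order terms is indeed routine, but should be carried out explicitly in a final write-up, since that is precisely where the cancellation is resolved.
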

\begin{proof}
It suffices to show that $\sigma_n^2\rightarrow+\infty$ by Lemma \ref{lem-rzv}.
Recall that
$$\Gx=\sum_{k=0}^{\lrf{n/2}}\binom{n}{2k}\binom{2k}{k}x^k
%=\sum_{k=0}^{\lrf{n/2}}\frac{n!}{k!k!(n-2k)!}x^k
$$
and $\gx(1)=T_n$.
We have
\begin{eqnarray*}
\gx'(1)
&=&\sum_{k}k\binom{n}{2k}\binom{2k}{k}=\dfrac{n}{2}\sum_{k}\binom{n-1}{2k-1}\binom{2k}{k}\\
&=&\dfrac{n}{2}\sum_{k}\left[\binom{n}{2k}-\binom{n-1}{2k}\right]\binom{2k}{k}\\
&=&\dfrac{n}{2}(T_n-T_{n-1}).
\end{eqnarray*}
It is known \cite[Corollary 2.6]{WZ14} that
$T_{n-1} T_{n+1}\ge T_{n}^2$ for $n\ge 4$.
The sequence $T_{n-1}/T_{n}$ is therefore non-decreasing and convergent.
Furthermore, $T_{n-1}/T_{n}\rightarrow 1/3$ by the difference equation \eqref{t-rr}.
It follows that
\begin{equation*}\label{mu}
\mu_n=\frac{\gx'(1)}{\gx(1)}=\frac{n(T_n-T_{n-1})}{2T_n}%=\frac{n}{2}(1-Q_n)
=n\left(\frac{1}{3}+\mathrm{o}(1)\right).
\end{equation*}
On the other hand,
\begin{equation}\label{2+1}
\gx''(1)+\gx'(1)
=\sum_{k}k^2\binom{n}{2k}\binom{2k}{k}
=\sum_k\frac{n!}{(k-1)!(k-1)!(n-2k)!}
=n(n-1)T_{n-2}.
\end{equation}
By \eqref{sg-f},
we have
$$\sigma_n^2=\frac{\gx''(1)+\gx'(1)}{\gx(1)}-\left[\frac{n(T_n-T_{n-1})}{2T_n}\right]^2
=\frac{n[4(n-1)T_{n-2}T_n-nT_n^2+2nT_{n-1}T_n-nT_{n-1}^2]}{4T_n^2},$$
and so
$$\sigma_n^2\ge\frac{n[(3n-4)T_{n-2}T_n-nT_n^2+2nT_{n-1}T_n]}{4T_n^2}
=\frac{n[(3n-4)T_{n-2}-nT_n+2nT_{n-1}]}{4T_n}
=\frac{n(T_{n-1}-T_{n-2})}{4T_n}$$
by the inequality $T_{n-1}^2\le T_{n-2}T_n$
and the difference equation $nT_n=(2n-1)T_{n-1}+3(n-1)T_{n-2}$.
Note that
$$\frac{n(T_{n-1}-T_{n-2})}{4T_n}
=\frac{n}{4}\cdot\frac{T_{n-1}}{T_n}\left(1-\frac{T_{n-2}}{T_{n-1}}\right)
=\frac{n}{4}\left(\frac{2}{9}+o(1)\right)
=\frac{n}{18}+o(n).$$
Thus $\sigma_n^2\rightarrow+\infty$, as required.
\end{proof}

\section{Generalized central trinomial coefficients}

Let
\begin{equation}\label{tbc-1}
  \left(x+b+\frac{c}{x}\right)^n=\sum_{k=-n}^nT_{n,k}(b,c)x^k.
\end{equation}
Then by the trinomial theorem, we have
\begin{equation}\label{tnk}
T_{n,k}(b,c)
=\sum_j\frac{n!}{j!(j+k)!(n-k-2j)!}b^{n-k-2j}c^j.
\end{equation}
In particular,
the constant term $T_{n,0}(b,c)$ is precisely the generalized central trinomial coefficient $T_n(b,c)$,
and $T_{n,-k}(b,c)=c^kT_{n,k}(b,c)$.

Define an infinity lower triangular matrix $\Tbc=[T_{n,k}(b,c)]_{n,k\ge 0}$.
Then
\begin{equation}\label{tbc-tri}
\Tbc=
\left(
  \begin{array}{llllll}
    1 &  &  &  &  &  \\
    b & 1 &  &  &  &  \\
    b^2+2c & 2b & 1 &  &  &  \\
    b^3+6bc & 3b^2+3c & 3b & 1 &  &  \\
    b^4+12b^2c+6c^2 & 4b^3+12bc & 6b^2+4c & 4b & 1 &  \\
    \vdots&\cdots&\cdots&&&\ddots
    %b^5+20b^3c+30bc^2 & 5b^4+30b^2c+10c^2 & 10b^3+20bc & 10b^2+5c & 5b & 1 \\
  \end{array}
\right).
\end{equation}
In this section,
we first show that $\Tbc$ is both a recursive matrix and a Riordan array,
and then apply the total positivity of matrices
to study analytic properties of generalized central trinomial coefficients $T_n(b,c)$.

Let $\b=(b_k)_{k\ge 0}$ and $\c=(c_k)_{k\ge 1}$ be two sequences of nonnegative numbers
and define an infinite lower triangular matrix $\RM=[r_{n,k}]_{n,k\ge 0}$
by the recurrence relation
\begin{equation}\label{rst-eq}
r_{0,0}=1,\qquad r_{n+1,k}=r_{n,k-1}+b_kr_{n,k}+c_{k+1}r_{n,k+1},
\end{equation}
where $r_{n,k}=0$ unless $n\ge k\ge 0$.
Following Aigner~\cite{Aig99,Aig01},
we say that $\RM$ is the {\it recursive matrix}
and $r_{n,0}$ are the {\it Catalan-like numbers}
corresponding to $(\b,\c)$ respectively.
The entry $r_{n,k}$ in $\RM$ counts weighted Motzkin lattice paths.
The following result is known as the fundamental theorem of recursive matrices
\cite[Fundamental Theorem]{Aig01}.

\begin{lem}\label{ft-rm}
Let $\d_0=1$ and $\d_n=c_1c_2\cdots c_n$. Then
\begin{equation}\label{ft1}
  \sum_{k}r_{m,k}r_{n,k}\d_k=r_{m+n,0}.
\end{equation}
Or equivalently,
the Hankel matrix $\HM=[r_{i+j,0}]_{i,j\ge 0}$ of Catalan-like numbers $r_{n,0}$
has the decomposition
\begin{equation}\label{snf}
  \HM=\RM \DM \RM^t,
\end{equation}
where
$\DM=\mathrm{diag}(\d_0,\d_1,\d_2,\ldots)$ is a diagonal matrix and $\RM^t$ is the transpose of the matrix $\RM$.
\end{lem}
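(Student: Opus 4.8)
The plan is to realize $\RM$ as the row-sequence generated by a single fixed tridiagonal matrix, and then to exploit the elementary fact that this matrix is symmetrized under conjugation by $\DM$. First I would introduce the tridiagonal matrix $\JM=[j_{p,q}]_{p,q\ge0}$ determined by $j_{q,q}=b_q$, $j_{q-1,q}=1$, $j_{q+1,q}=c_{q+1}$, and $j_{p,q}=0$ otherwise. Reading the recurrence \eqref{rst-eq} as $r_{n+1,k}=\sum_q r_{n,q}\,j_{q,k}$, and using that $r_{0,k}$ equals $1$ for $k=0$ and $0$ for $k\ge1$, an immediate induction on $n$ gives $r_{n,k}=(\JM^{n})_{0,k}$ for all $n,k\ge0$; in particular $r_{n,0}=(\JM^{n})_{0,0}$. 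Since $\JM$ is tridiagonal, $\JM^{n}$ is banded with $(\JM^{n})_{0,k}=(\JM^{n})_{k,0}=0$ for $k>n$, so every sum over $k$ below is finite and the infinite-matrix manipulations are legitimate.

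The key algebraic input is the symmetrization identity $\DM\,\JM^{t}\,\DM^{-1}=\JM$, equivalently $\JM^{t}=\DM^{-1}\JM\DM$. This reduces to the three one-line checks $\d_q b_q\d_q^{-1}=b_q$, $\d_{q+1}\cdot1\cdot\d_q^{-1}=c_{q+1}$ and $\d_{q-1}c_q\d_q^{-1}=1$, all immediate from $\d_q=c_q\d_{q-1}$. Raising to the $n$th power yields $(\JM^{n})^{t}=\DM^{-1}\JM^{n}\DM$, i.e. $\d_k(\JM^{n})_{0,k}=(\JM^{n})_{k,0}$ for every $k$. Hence
\begin{align*}
\sum_k r_{m,k}r_{n,k}\d_k
&=\sum_k(\JM^{m})_{0,k}\bigl(\d_k(\JM^{n})_{0,k}\bigr)
 =\sum_k(\JM^{m})_{0,k}(\JM^{n})_{k,0}\\
&=(\JM^{m+n})_{0,0}=r_{m+n,0},
\end{align*}
which is \eqref{ft1}. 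Reading this identity entrywise with $m=i$, $n=j$ gives $(\RM\DM\RM^{t})_{i,j}=\sum_k r_{i,k}\d_k r_{j,k}=r_{i+j,0}=\HM_{i,j}$, which is \eqref{snf}.

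I expect the only point needing genuine care to be the bookkeeping in the symmetrization identity --- keeping the super- and sub-diagonal roles of the entries $1$ and $c_{q+1}$ straight under transposition, and noting explicitly that all matrix products are well defined because the relevant sums terminate. An alternative, more combinatorial proof uses the stated interpretation of $r_{n,k}$ as the weight enumerator of Motzkin paths of length $n$ from level $0$ to level $k$, where an up-step has weight $1$, a level step at height $j$ has weight $b_j$, and a down-step from $j$ to $j-1$ has weight $c_j$: one cuts a path enumerated by $r_{m+n,0}$ after its first $m$ steps at the height $k$ it then occupies, so that the initial piece is enumerated by $r_{m,k}$, and then checks by reversing the terminal piece that completions of length $n$ from $k$ back down to $0$ are enumerated by $\d_k r_{n,k}$, the factor $\d_k=c_1\cdots c_k$ being exactly the net excess of down-step over up-step weights accumulated in descending from height $k$ to height $0$. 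This recovers the same identity and is close to Aigner's original viewpoint, but the matrix argument above is shorter.
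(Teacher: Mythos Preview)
The paper does not actually prove this lemma: it is quoted as the Fundamental Theorem of recursive matrices and referenced to Aigner~\cite[Fundamental Theorem]{Aig01}. So there is no ``paper's own proof'' to compare against; your proposal supplies a proof where the paper merely cites one.

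Your matrix argument is correct and pleasant. Writing the recurrence as right multiplication of the row vector $(r_{n,0},r_{n,1},\ldots)$ by the tridiagonal $\JM$ gives $r_{n,k}=(\JM^n)_{0,k}$, and the symmetrization $\DM\JM^{t}=\JM\DM$ (equivalently your $\DM\JM^{t}\DM^{-1}=\JM$) then yields $\d_k(\JM^{n})_{0,k}=(\JM^{n})_{k,0}$, from which \eqref{ft1} and \eqref{snf} follow at once. One small caveat: the paper allows the $c_k$ to be merely nonnegative, in which case $\DM^{-1}$ need not exist. This is harmless---simply work with the intertwining relation $\DM\JM^{t}=\JM\DM$ directly (your three one-line checks establish exactly this), and then $\DM(\JM^{t})^{n}=\JM^{n}\DM$ follows by induction without ever inverting $\DM$; reading off the $(0,k)$ entry gives the same identity $\d_k(\JM^{n})_{0,k}=(\JM^{n})_{k,0}$.

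Your alternative combinatorial sketch via cutting a Motzkin path at step $m$ and reversing the tail is essentially Aigner's original argument; your algebraic version is shorter and arguably cleaner, at the cost of hiding the combinatorics.
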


The following corollary follows immediately from \eqref{snf}.
\begin{coro}\label{hd-cln}
The $n$th Hankel determinant of the sequence $(r_{k,0})_{k\ge 0}$ is given by
$$\det [r_{i+j,0}]_{0\le i,j\le n}=\d_0\d_1\cdots \d_n.$$
\end{coro}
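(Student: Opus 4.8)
The plan is to read the finite Hankel determinant off directly from the infinite factorization $\HM=\RM\DM\RM^t$ provided by Lemma \ref{ft-rm}. The first step is to observe that the recursive matrix $\RM$ is lower \emph{uni}triangular, i.e.\ $r_{n,n}=1$ for every $n\ge 0$: indeed $r_{0,0}=1$, and setting $k=n+1$ in \eqref{rst-eq} collapses the recurrence to $r_{n+1,n+1}=r_{n,n}$ since $r_{n,n+1}=0$, so the claim follows by induction. In particular every finite leading principal section $\RM_n$ of $\RM$ has $\det\RM_n=1$.

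The second step is to truncate the identity \eqref{snf} to its leading $(n+1)\times(n+1)$ principal block. Writing $\DM_n=\mathrm{diag}(\d_0,\d_1,\ldots,\d_n)$, I claim that
\[
[r_{i+j,0}]_{0\le i,j\le n}=\RM_n\DM_n\RM_n^t .
\]
To see this, note that the $(i,j)$ entry of $\RM\DM\RM^t$ is $\sum_{k\ge 0}r_{i,k}\,\d_k\,r_{j,k}$, and lower-triangularity gives $r_{i,k}=0$ whenever $k>i$; hence for $i,j\le n$ only the indices $k\le n$ contribute, which is precisely the $(i,j)$ entry of $\RM_n\DM_n\RM_n^t$.

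The final step is to take determinants: by multiplicativity of the determinant and the first step,
\[
\det[r_{i+j,0}]_{0\le i,j\le n}=\det\RM_n\cdot\det\DM_n\cdot\det\RM_n^t=\d_0\d_1\cdots\d_n ,
\]
which is the asserted formula. There is essentially no obstacle here, since all the substance is already contained in Lemma \ref{ft-rm}; the only point deserving a line of justification is that passing to a leading principal submatrix commutes with the triangular product $\RM\DM\RM^t$, which is exactly the content of the second step.
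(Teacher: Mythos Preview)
Your proof is correct and follows exactly the route the paper intends: the paper simply states that the corollary ``follows immediately from \eqref{snf}'', and your three steps (unitriangularity of $\RM$, compatibility of truncation with the triangular product, multiplicativity of the determinant) are precisely the details that make this immediate consequence explicit.
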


Let $f(x)=\sum_{n\ge 0}f_nx^n$ and $g(x)=\sum_{n\ge 0}g_nx^n$
be two formal power series.
A {\it Riordan array},
denoted by $\R(g(x),f(x))$,
is an infinite matrix whose generating function of the $k$th column is $x^kf^k(x)g(x)$ for $k\ge 0$.
We say that a Riordan array $\R(g(x),f(x))$ is {\it proper}
if $g_0=1$ and $f_0\neq 0$.
In this case,
$\R(g(x),f(x))$ is an infinite lower triangular matrix.
Riordan arrays play an important unifying role in enumerative combinatorics,
especially in dealing with combinatorial sums
\cite{SGWW91,Spr94}.
The following result is known as the fundamental theorem of Riordan arrays by Shapiro \cite{Sha94}. %(FTRA) \cite{SGWW91}.

\begin{lem}\label{ft-ra}
Let $\RM=\R(g(x),f(x))=[r_{n,k}]_{n,k\ge 0}$ be a Riordan array
and $h(x)=\sum_{k\ge 0}h_kx^k$.
Then
\begin{equation}\label{ftra-f}
 \sum_{k=0}^nr_{n,k}h_k=[x^n]g(x)h(xf(x)).
\end{equation}
\end{lem}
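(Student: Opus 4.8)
The plan is to establish \eqref{ftra-f} directly in the ring of formal power series, by unwinding the column generating functions of the Riordan array $\RM$ and interchanging the order of summation. First I would recall that, by the definition of $\R(g(x),f(x))$, the generating function of the $k$th column of $\RM$ is
$$\sum_{n\ge 0}r_{n,k}x^n=x^kf^k(x)g(x),\qquad k\ge 0.$$
Since $x^kf^k(x)g(x)$ has order at least $k$ in $x$, we have $r_{n,k}=0$ whenever $k>n$; hence, coefficient-wise in $x$, every infinite sum appearing below is in fact a finite one, which legitimizes the rearrangements.

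Next I would compute the generating function of the left-hand side of \eqref{ftra-f}. Using the displayed column generating functions and swapping the two summations gives
$$\sum_{n\ge 0}\left(\sum_{k=0}^nr_{n,k}h_k\right)x^n
=\sum_{k\ge 0}h_k\sum_{n\ge 0}r_{n,k}x^n
=\sum_{k\ge 0}h_k\,x^kf^k(x)g(x)
=g(x)\sum_{k\ge 0}h_k\bigl(xf(x)\bigr)^k
=g(x)\,h\bigl(xf(x)\bigr),$$
where the last step is the substitution $h\mapsto h(xf(x))$, a well-defined operation on formal power series because $xf(x)$ has positive order. Extracting the coefficient of $x^n$ from the two ends of this chain of equalities yields $\sum_{k=0}^nr_{n,k}h_k=[x^n]g(x)h(xf(x))$, which is \eqref{ftra-f}.

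There is no genuinely hard step here: the argument is essentially a one-line manipulation of generating functions once the column generating functions are in place. The only point deserving a word of care is the interchange of the two summations together with the composition $h(xf(x))$, and both are valid for the same reason — the column of index $k$ has order at least $k$ in $x$, so all the sums involved are locally finite.
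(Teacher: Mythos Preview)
Your proof is correct and is exactly the standard argument for the fundamental theorem of Riordan arrays: swap the order of summation, invoke the column generating functions, and recognize the resulting sum as $g(x)h(xf(x))$. The only care point---that $xf(x)$ has positive order so the composition and the summation interchange are both legitimate in the ring of formal power series---you have addressed explicitly.

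Note that the paper itself does not supply a proof of this lemma; it merely states the result and attributes it to Shapiro \cite{Sha94}. So there is no alternative argument in the paper to compare yours against. Your write-up is essentially the proof one would find in any standard reference on Riordan arrays (e.g., Sprugnoli \cite{Spr94}).
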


A proper Riordan array $\RM=[r_{n,k}]_{n,k\ge 0}$
can be characterized by two sequences
$(a_k)_{k\ge 0}$ and $(z_k)_{k\ge 0}$ such that
\begin{equation}\label{rrr-c}
r_{0,0}=1,\quad r_{n+1,0}=\sum_{j\ge 0}z_jr_{n,j},\quad r_{n+1,k+1}=\sum_{j\ge 0}a_jr_{n,k+j}
\end{equation}
for $n,k\ge 0$.
Let $A(x)=\sum_{n\ge 0}a_nx^n$ and $Z(x)=\sum_{n\ge 0}z_nx^n$.
Then
\begin{equation}\label{gf-z}
g(x)=\frac{1}{1-xZ(xf(x))}
\end{equation}
and
\begin{equation}\label{gf-a}
f(x)=A(xf(x)).
\end{equation}
We refer the reader to \cite{HS09} for details.

\begin{exm}
Let $\b=(b,b,b,\ldots)$ and $\c=(c,c,c,\ldots)$.
Then the corresponding Catalan-like numbers are the generalized Motzkin numbers
\begin{equation}\label{gmn}
  M_n(b,c)=\sum_{k=0}^{\lrf{n/2}}\frac{n!}{k!(k+1)!(n-2k)!}b^{n-2k}c^k
  =\sum_{k=0}^{\lrf{n/2}}\binom{n}{2k}\frac{1}{k+1}\binom{2k}{k}b^{n-2k}c^k.
\end{equation}
Let $\Mbc$ denote the corresponding recursive matrix.
Then $\Mbc$ is also a Riordan array
with $A(x)=1+bx+cx^2$ and $Z(x)=b+cx$.
Furthermore,
$\Mbc=\R(\Mx,\Mx)$,
where
\begin{equation}\label{mbct}
  \Mx=\sum_{n\ge 0}M_n(b,c)x^n
  %=\frac{1-bt-\sqrt{(1-bt)^2-4ct^2}}{2ct^2}.
  =\frac{1-bx-\sqrt{1-2bx+(b^2-4c)x^2}}{2cx^2}.
\end{equation}
See \cite{WZ15} for details.
\end{exm}

\begin{thm}\label{tbc-ra}
\begin{enumerate}[\rm (i)]
  \item
  The triangle $\Tbc$ is the recursive matrix corresponding to
  $\b=(b,b,b,\ldots)$ and $\c=(2c,c,c,\ldots)$
  and the generalized central trinomial coefficients $T_n(b,c)$
  are the associated Catalan-like numbers.
  \item
  The triangle $\Tbc$ is a Riordan array and $\Tbc=\R(\Tt, \Mx)$.
\end{enumerate}
\end{thm}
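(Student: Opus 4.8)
The plan is to treat the two parts in turn. Part (i) I would derive directly from the Laurent-polynomial definition \eqref{tbc-1} of $T_{n,k}(b,c)$; part (ii) then falls out of the $A$- and $Z$-sequence machinery \eqref{rrr-c}--\eqref{gf-a}, in exact parallel with the computation of $\Mbc=\R(\Mx,\Mx)$ recalled in the Example above.

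For part (i), I would multiply both sides of \eqref{tbc-1} by $x+b+c/x$ and compare the coefficient of $x^k$ on each side. This gives, for every integer $k$,
$$T_{n+1,k}(b,c)=T_{n,k-1}(b,c)+b\,T_{n,k}(b,c)+c\,T_{n,k+1}(b,c).$$
For $k\ge 1$ this is precisely the defining recurrence \eqref{rst-eq} of a recursive matrix with $b_k=b$ and $c_{k+1}=c$. For $k=0$ the term $T_{n,-1}(b,c)$ appears; invoking the reflection identity $T_{n,-1}(b,c)=c\,T_{n,1}(b,c)$ (the case $k=1$ of $T_{n,-k}(b,c)=c^kT_{n,k}(b,c)$) folds it onto column $1$ and the recurrence becomes $T_{n+1,0}(b,c)=b\,T_{n,0}(b,c)+2c\,T_{n,1}(b,c)$, which is \eqref{rst-eq} at $k=0$ with $c_1=2c$. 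Since $T_{0,0}(b,c)=1$ and $T_{n,k}(b,c)=0$ whenever $k>n\ge 0$, the triangle $\Tbc$ is the recursive matrix attached to $\b=(b,b,b,\ldots)$ and $\c=(2c,c,c,\ldots)$, and its leftmost column $(T_{n,0}(b,c))_{n\ge0}=(T_n(b,c))_{n\ge0}$ is the associated Catalan-like sequence. The one genuinely load-bearing point is the doubling $c_1=2c$, which records the symmetry of $(x+b+c/x)^n$ about its $x^0$ term.

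For part (ii), I would read off the $A$- and $Z$-sequences of $\Tbc$ from the recurrence just obtained. Replacing $k$ by $k+1$ in \eqref{rst-eq} gives $r_{n+1,k+1}=r_{n,k}+b\,r_{n,k+1}+c\,r_{n,k+2}$ for all $k\ge 0$, so $A(x)=1+bx+cx^2$; the boundary case $k=0$ gives $r_{n+1,0}=b\,r_{n,0}+2c\,r_{n,1}$, so $Z(x)=b+2cx$. Because $\Tbc$ is lower triangular with $r_{0,0}=1$ and nonzero diagonal ($r_{n,n}=1$), the characterization \eqref{rrr-c}--\eqref{gf-a} identifies it as the proper Riordan array $\R(g,f)$ with $f=A(xf)=1+bxf+cx^2f^2$ and $g=1/\bigl(1-xZ(xf)\bigr)$. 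Solving the quadratic and selecting the branch with $f(0)=1$ gives
$$f(x)=\frac{1-bx-\sqrt{1-2bx+(b^2-4c)x^2}}{2cx^2}=\Mx$$
by \eqref{mbct}. Then $1-xZ(xf)=1-bx-2cx^2f(x)=\sqrt{1-2bx+(b^2-4c)x^2}$ by the displayed formula for $f$, so $g(x)=1/\sqrt{1-2bx+(b^2-4c)x^2}=\Tt$ by \eqref{tnbc-gf}. Hence $\Tbc=\R(\Tt,\Mx)$, as claimed.

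I do not foresee a serious obstacle. Once part (i) is in place, part (ii) is purely the formal-power-series bookkeeping of \eqref{gf-z}--\eqref{gf-a} together with two one-line square-root simplifications, mirroring the Example. The only care needed lies at the boundary column $k=0$: one must remember to apply $T_{n,-1}(b,c)=c\,T_{n,1}(b,c)$ and then track that the extra factor $2c$ passes into $c_1$ and into $Z(x)$ but leaves the $A$-sequence untouched.
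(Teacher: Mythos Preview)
Your proposal is correct and follows essentially the same route as the paper: derive the three-term recurrence from \eqref{tbc-1}, use $T_{n,-1}(b,c)=c\,T_{n,1}(b,c)$ to obtain $c_1=2c$ in the boundary column, then read off $A(x)=1+bx+cx^2$ and $Z(x)=b+2cx$ and solve \eqref{gf-a}--\eqref{gf-z} for $f=\Mx$ and $g=\Tt$. The only cosmetic difference is that the paper writes the quadratic for $f$ as $cx^2f^2-(1-bx)f+1=0$ and computes $g$ directly from $1/(1-bx-2cx^2f)$, whereas you phrase the same step as identifying $1-xZ(xf)$ with the square root; the content is identical.
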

\begin{proof}
(i)\quad
By the definition \eqref{tbc-1}, we have $T_{0,0}(b,c)=1$,
\begin{equation}\label{tbc-d0}
T_{n+1,0}(b,c)=T_{n,-1}(b,c)+bT_{n,0}(b,c)+cT_{n,1}(b,c)=bT_{n,0}(b,c)+2cT_{n,1}(b,c)
\end{equation}
since $T_{n,-1}(b,c)=cT_{n,1}(b,c)$, and
\begin{equation}\label{tbc-d1}
T_{n+1,k+1}(b,c)=T_{n,k}(b,c)+bT_{n,k+1}(b,c)+cT_{n,k+2}(b,c)
\end{equation}
for $n,k\ge 0$.
We conclude that
$\Tbc=[T_{n,k}(b,c)]_{n,k\ge 0}$ is the recursive matrix
corresponding to
$\b=(b,b,b,\ldots)$ and $\c=(2c,c,c,\ldots)$
and the generalized trinomial coefficients $T_n(b,c)=T_{n,0}(b,c)$
are the associated Catalan-like numbers.

(ii)\quad
Clearly, the triangle $\Tbc$ is a Riordan array with $A(x)=1+bx+cx^2$ and $Z(x)=b+2cx$.
Let $\Tbc=\R(g(x),f(x))$.
Then by \eqref{gf-a},
we have $f(x)=1+bxf(x)+cx^2f^2(x)$,
and so
\begin{equation}\label{h-e}
cx^2f^2(x)-(1-bx)f(x)+1=0.
\end{equation}
Solve \eqref{h-e} to obtain
$$f(x)=\frac{1-bx-\sqrt{(1-bx)^2-4cx^2}}{2cx^2}
=\Mx.$$
By \eqref{gf-z}, we have
$$g(x)=\frac{1}{1-x[b+2cx\cdot f(x)]}
=\frac{1}{\sqrt{(1-bx)^2-4cx^2}}
=\frac{1}{\sqrt{1-2bx+(b^2-4c)x^2}}=\Tt.$$
Thus $\Tbc=\R(g(x),f(x))=\R(\Tt, \Mx)$.
\end{proof}

The following result has occurred in \cite[Theorem 4]{PRB11},
and is clear from Corollary \ref{hd-cln}.

\begin{coro}\label{tbc-hd}
The $n$th Hankel determinant of the sequence $(T_{k}(b,c))_{k\ge 0}$ is given by
$$\det [T_{i+j}(b,c)]_{0\le i,j\le n}=2^nc^{\binom{n+1}{2}}.$$
\end{coro}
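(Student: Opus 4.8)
The plan is to apply Corollary~\ref{hd-cln} directly to the Catalan-like numbers $r_{n,0}=T_n(b,c)$, using the explicit recurrence data supplied by Theorem~\ref{tbc-ra}(i). By that theorem, $\Tbc$ is the recursive matrix corresponding to $\b=(b,b,b,\ldots)$ and $\c=(2c,c,c,\ldots)$; in particular the relevant sequence of lower-diagonal weights is $c_1=2c$ and $c_k=c$ for every $k\ge 2$.

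Next I would compute the partial products $\d_k=c_1c_2\cdots c_k$ appearing in Corollary~\ref{hd-cln}. Since $\d_0=1$, and for $k\ge 1$ exactly one factor (namely $c_1$) equals $2c$ while the remaining $k-1$ factors equal $c$, one gets $\d_k=2c^{k}$ for all $k\ge 1$. Substituting into the formula of Corollary~\ref{hd-cln} then yields
$$\det [T_{i+j}(b,c)]_{0\le i,j\le n}=\d_0\d_1\cdots\d_n=\prod_{k=1}^{n}2c^{k}=2^{n}c^{1+2+\cdots+n}=2^{n}c^{\binom{n+1}{2}},$$
which is exactly the asserted value.

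There is essentially no obstacle here: the only point that needs a moment's attention is that the first entry of $\c$ is $2c$ rather than $c$, and it is precisely this that contributes the factor $2^n$ — one factor of $2$ inside each of $\d_1,\ldots,\d_n$ — while the exponent of $c$ is the elementary sum $\sum_{k=1}^{n}k=\binom{n+1}{2}$. One should also remark that this recovers the result stated in \cite[Theorem 4]{PRB11}.
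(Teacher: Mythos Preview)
Your argument is correct and is exactly the approach the paper takes: the paper simply states that the result ``is clear from Corollary~\ref{hd-cln}'', and you have written out precisely those details using the data $\c=(2c,c,c,\ldots)$ from Theorem~\ref{tbc-ra}(i).
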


\begin{thm}\label{tbc-bf}
We have
\begin{equation}\label{tbc-bt}
\sum_{k=0}^n\binom{n}{k}T_k(b,c)a^{n-k}=T_n(a+b,c).
\end{equation}
\end{thm}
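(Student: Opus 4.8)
The plan is to pass to ordinary generating functions and use the standard description of a binomial transform. Writing $g_n=\sum_{k=0}^n\binom{n}{k}T_k(b,c)a^{n-k}$ for the left-hand side of \eqref{tbc-bt}, a routine manipulation of the double sum (swapping the order of summation and summing the geometric-type inner series) gives
$$\sum_{n\ge 0}g_nx^n=\frac{1}{1-ax}\,\mathscr{T}\!\left(b,c;\frac{x}{1-ax}\right),$$
so it suffices to show this series equals $\mathscr{T}(a+b,c;x)$ and then to compare coefficients of $x^n$.

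First I would substitute $x\mapsto x/(1-ax)$ into the closed form \eqref{tnbc-gf} and clear the inner denominators, obtaining
$$\mathscr{T}\!\left(b,c;\frac{x}{1-ax}\right)=\frac{1-ax}{\sqrt{(1-ax)^2-2bx(1-ax)+(b^2-4c)x^2}}.$$
Multiplying by $1/(1-ax)$ cancels the numerator, leaving $\big((1-ax)^2-2bx(1-ax)+(b^2-4c)x^2\big)^{-1/2}$ as the generating function of $(g_n)_{n\ge 0}$. It then remains only to expand the quadratic under the radical; collecting terms yields
$$(1-ax)^2-2bx(1-ax)+(b^2-4c)x^2=1-2(a+b)x+\big((a+b)^2-4c\big)x^2,$$
which is precisely the polynomial occurring in $\mathscr{T}(a+b,c;x)$ by \eqref{tnbc-gf}. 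Equating coefficients of $x^n$ gives \eqref{tbc-bt}.

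Alternatively, one can argue directly from the defining expansion: $T_n(a+b,c)=[x^n]\big(x^2+(a+b)x+c\big)^n=[x^n]\big(ax+(x^2+bx+c)\big)^n$; expanding by the binomial theorem and using $[x^n]x^{n-k}(x^2+bx+c)^k=[x^k](x^2+bx+c)^k=T_k(b,c)$ term by term reproduces the right-hand side of \eqref{tbc-bt} at once. The only point needing a word of care is the validity of the substitution rule for the generating function of a binomial transform, which holds because $x/(1-ax)$ has zero constant term, so the composition is a well-defined formal power series over $\mathbb{Q}[a,b,c]$; there is no genuine analytic obstacle, and the remaining computations are mechanical.
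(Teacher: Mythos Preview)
Your main argument is correct and is essentially the paper's proof: both establish the generating-function identity
\[
\frac{1}{1-ax}\,\mathscr{T}\!\left(b,c;\frac{x}{1-ax}\right)=\mathscr{T}(a+b,c;x)
\]
and then read off coefficients. The only cosmetic difference is that the paper packages the step ``$\sum_n g_n x^n=\frac{1}{1-ax}\,\mathscr{T}(b,c;x/(1-ax))$'' as an instance of the fundamental theorem of Riordan arrays (Lemma~\ref{ft-ra}) applied to the Riordan array $\PM=\R\!\left(\frac{1}{1-ax},\frac{1}{1-ax}\right)$, whereas you invoke the binomial-transform rule for ordinary generating functions directly; the underlying computation is identical.

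Your alternative proof, expanding $\big(ax+(x^2+bx+c)\big)^n$ by the binomial theorem and extracting $[x^n]$ term by term, is a genuinely different and more elementary route that does not appear in the paper. It avoids generating functions entirely and works straight from the definition of $T_n(b,c)$ as a coefficient; this is arguably the cleanest proof of \eqref{tbc-bt}, at the cost of not tying the result into the Riordan-array machinery the paper develops.
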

\begin{proof}
Let
$$\PM=\left[\binom{n}{k}a^{n-k}\right]_{n,k\ge 0}$$
be the generalized Pascal triangle.
Then the generating function of the $k$th column of $\PM$ is
$$\sum_{n\ge 0} \binom{n}{k}a^{n-k} x^n=\frac{x^k}{(1-ax)^{k+1}}.$$
Thus $\PM$ is a Riordan array:
$$\PM=\R\left(\frac{1}{1-ax},\frac{1}{1-ax}\right).$$
By \eqref{ftra-f} and \eqref{tnbc-gf},
we have
\begin{eqnarray*}
  \sum_{k=0}^n\binom{n}{k}a^{n-k}\cdot T_k(b,c)
  &=& [x^n]\frac{1}{1-ax}\frac{1}{\sqrt{(1-b\frac{x}{1-ax})^2-\frac{4cx^2}{(1-ax)^2}}}\\
  &=& [x^n]\frac{1}{\sqrt{[1-(a+b)x]^2-4cx^2}}\\
  &=& T_n(a+b,c).
\end{eqnarray*}
This proves \eqref{tbc-bt}.
\end{proof}

The {\it Hankel transform} $(y_n)_{n\ge 0}$ and the {\it binomial transform} $(z_n)_{n\ge 0}$
of a sequence $(x_n)_{n\ge 0}$ are defined by
$y_n=\det [x_{i+j}]_{0\le i,j\le n}$ and
$z_n=\sum_{k=0}^n\binom{n}{k}x_k$ respectively.
We obtain the Hankel transform of the sequence $(T_n(b,c)_{n\ge 0})$ from Corollary \ref{tbc-hd}.
Also, we can obtain the binomial transform $\sum_{k=0}^n\binom{n}{k}T_k(b,c)=T_n(b+1,c)$
by taking $a=1$ in Theorem \ref{tbc-bf}.

Let $\alpha=(a_k)_{k\ge0}$ be an infinite sequence of nonnegative numbers.
We say that the sequence is {\it log-convex} if $a_ia_{j+1}\ge a_{i+1}a_j$ for $0\le i< j$.
If there exists a nonnegative Borel measure $\mu$ on $[0,+\infty)$ such that
\begin{equation}\label{h-i-e}
a_k=\int_{0}^{+\infty}x^kd\mu(x),
\end{equation}
then we say that $(a_k)_{k\ge 0}$ is a {\it Stieltjes moment} (SM for short) sequence.
Let $\HM(\alpha)=[a_{i+j}]_{i,j\ge 0}$ be the {\it Hankel matrix} of the sequence $\alpha$.
Then
$$\HM(\alpha)
=\left[\begin{array}{lllll}
a_{0} & a_1 & a_2 & a_3 & \cdots\\
a_{1} & a_{2} & a_3 & a_4 & \cdots\\
a_{2} & a_{3} & a_{4} & a_5 & \cdots\\
a_{3} & a_{4} & a_{5} & a_{6} & \cdots\\
\vdots & \vdots &\vdots & \vdots & \ddots\\
\end{array}\right].$$
Clearly, $\alpha$ is log-convex if and only if $\HM(\alpha)$ is TP$_2$.
It is well known that the sequence $\alpha$ is SM if and only if
the corresponding Hankel matrix $\HM(\alpha)$ is totally positive
(see \cite[Theorem 4.4]{Pin10} for instance).
We refer the reader to \cite{Fla80,LMW16,LW-lcx,WZ16}.

\begin{thm}\label{tbc-sm}
Let $b,c>0$.
\begin{enumerate}[\rm (i)]
  \item The sequence $(T_n(b,c))_{n\ge 0}$ is log-convex if and only if $b^2\ge 2c$.
  \item The sequence $(T_n(b,c))_{n\ge 0}$ is SM if and only if $b^2\ge 4c$.
\end{enumerate}
\end{thm}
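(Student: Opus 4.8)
The plan is to read both equivalences off the recursive-matrix structure of Theorem~\ref{tbc-ra}, together with the total-positivity descriptions recalled above (a Hankel matrix is TP$_2$ iff the sequence is log-convex, and TP iff it is a Stieltjes moment sequence). Fix $b,c>0$ and put $\alpha=(T_n(b,c))_{n\ge0}$. By Theorem~\ref{tbc-ra}, $\Tbc$ is the recursive matrix for $\b=(b,b,b,\ldots)$ and $\c=(2c,c,c,\ldots)$, so Lemma~\ref{ft-rm} gives $\HM(\alpha)=[T_{i+j}(b,c)]_{i,j\ge0}=\RM\DM\RM^t$ with $\RM=\Tbc$ and $\DM=\mathrm{diag}(1,2c,2c^2,2c^3,\ldots)$; and by \eqref{rst-eq} (equivalently, since $\Tbc=\R(\Tt,\Mx)$ has $A(x)=1+bx+cx^2$ and $Z(x)=b+2cx$) the production matrix of $\Tbc$ is the tridiagonal matrix
$$\JM=\left(\begin{array}{ccccc} b&1&&&\\ 2c&b&1&&\\ &c&b&1&\\ &&c&b&\ddots\\ &&&\ddots&\ddots\end{array}\right).$$

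For part~(i), necessity is a one-line computation: $T_1T_3-T_2^2=b(b^3+6bc)-(b^2+2c)^2=2c(b^2-2c)$, which is negative once $b^2<2c$, so $\alpha$ is then not log-convex. For sufficiency assume $b^2\ge2c$. A direct check shows every $2\times2$ minor of $\JM$ is nonnegative, the binding condition being the top-left minor $b^2-2c\ge0$; hence $\JM$ is TP$_2$. Since it is known (see, e.g., \cite{LMW16}) that a recursive matrix inherits TP$_r$ from its tridiagonal production matrix, $\RM=\Tbc$ is TP$_2$; as $\DM$ is a nonnegative diagonal matrix (hence TP), Cauchy--Binet applied to $\HM(\alpha)=\RM\DM\RM^t$ shows $\HM(\alpha)$ is TP$_2$, i.e.\ $\alpha$ is log-convex.

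For part~(ii), I would produce the representing measure explicitly. Since $c>0$, factor $1-2bx+(b^2-4c)x^2=(1-r_+x)(1-r_-x)$ with $r_\pm=b\pm2\sqrt c$, so by \eqref{tnbc-gf} one has $\Tt=\bigl((1-r_+x)(1-r_-x)\bigr)^{-1/2}$. An affine image of the classical arcsine (Chebyshev) distribution is the probability measure $d\mu(t)=\bigl(\pi\sqrt{(r_+-t)(t-r_-)}\bigr)^{-1}\,dt$ on $[r_-,r_+]$, whose Stieltjes transform is $\int(1-xt)^{-1}\,d\mu(t)=\bigl((1-r_+x)(1-r_-x)\bigr)^{-1/2}$; comparing coefficients gives $T_n(b,c)=\int_{r_-}^{r_+}t^n\,d\mu(t)$ for all $n$. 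If $b^2\ge4c$ then $r_-=b-2\sqrt c\ge0$, so $\mathrm{supp}\,\mu\subseteq[0,\infty)$ and $\alpha$ is a Stieltjes moment sequence. If $0<b^2<4c$ then $r_-<0$ and $\mu$ charges $(r_-,0)$; since $\mu$ has compact support, $\alpha$ is a determinate moment sequence, so $\mu$ is its unique representing measure on $\mathbb{R}$, and it is not carried by $[0,\infty)$ — hence $\alpha$ is not a Stieltjes moment sequence.

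The step I expect to be most delicate is the sharp necessity in part~(ii): by Corollary~\ref{tbc-hd} the Hankel determinants $\det[T_{i+j}(b,c)]_{0\le i,j\le n}=2^nc^{\binom{n+1}{2}}$ are positive for all $n$, and the shifted $2\times2$ minor used in part~(i) only detects $b^2\ge2c$, so no single finite determinant witnesses the value $4c$; this is what forces the passage to the representing measure and its determinacy. (An alternative is to note that $\alpha$ is SM iff the shifted Hankel matrix $[T_{i+j+1}(b,c)]_{i,j\ge0}$ is positive semidefinite; using $\HM(\alpha)=\RM\DM\RM^t$ this reduces to positive semidefiniteness of the symmetric tridiagonal matrix with main diagonal $b$ and off-diagonals $\sqrt{2c},\sqrt c,\sqrt c,\ldots$, whose $(n+1)$st leading principal minor equals $\rho_+^{n+1}+\rho_-^{n+1}$ with $\rho_\pm=\tfrac12\bigl(b\pm\sqrt{b^2-4c}\bigr)$ the roots of $t^2-bt+c$; these are all nonnegative exactly when $b^2\ge4c$, since for $b^2<4c$ they equal $2c^{(n+1)/2}\cos((n+1)\theta)$ with $\cos\theta=b/(2\sqrt c)\in(0,1)$, which changes sign.)
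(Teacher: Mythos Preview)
Your part~(i) matches the paper's argument: both directions use the tridiagonal production matrix $\JM$, with sufficiency via TP$_2$ of $\JM$ implying log-convexity of the Catalan-like numbers (the paper cites \cite{CLW15rm} for this, not \cite{LMW16}), and necessity via the single minor $T_1T_3-T_2^2=2c(b^2-2c)$.

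Your part~(ii), however, takes a genuinely different route. The paper argues purely algebraically: it invokes the Stieltjes criterion in terms of the two Hankel determinant sequences, uses Aigner's formula $\det[T_{i+j+1}(b,c)]_{0\le i,j\le n}=2^nc^{\binom{n+1}{2}}u_n$ with $u_n$ the $(n{+}1)$st leading principal minor of $\JM$, and then cites \cite{CLW15ra} and \cite{Min88} for the equivalence of $u_n>0$ for all $n$ with $b^2\ge4c$. Your main argument instead constructs the representing measure explicitly as the affine arcsine law on $[b-2\sqrt c,\,b+2\sqrt c]$ and uses Hamburger determinacy (compact support) to rule out any Stieltjes representation when $b^2<4c$. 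This is correct, and it actually resolves the question the paper leaves open in the Remark immediately following the theorem (``we do not know how to obtain the associated measure and whether it is discrete or continuous''): the measure is the continuous arcsine density $d\mu(t)=\bigl(\pi\sqrt{(r_+-t)(t-r_-)}\bigr)^{-1}dt$.

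Your parenthetical alternative is essentially the paper's approach made explicit: rather than citing that $\JM$ is TP iff $b^2\ge4c$, you solve the recurrence $u_n=bu_{n-1}-cu_{n-2}$ to get $u_n=\rho_+^{n+1}+\rho_-^{n+1}$ and observe the sign change when $b^2<4c$. That computation is correct (the initial values $u_0=b$, $u_1=b^2-2c$ match), and it gives a self-contained replacement for the external citations. One small wording issue: ``positive semidefiniteness of the symmetric tridiagonal matrix'' is not quite the right intermediate object; what you are really using is that $\det[T_{i+j+1}]_{0\le i,j\le n}$ factors as $\delta_0\cdots\delta_n\cdot u_n$, so positivity of the shifted Hankel minors is equivalent to positivity of the $u_n$.
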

\begin{proof}
(i)\quad
Consider the tridiagonal matrix
\begin{equation}\label{j-tri}
  \JM=\left(
      \begin{array}{ccccc}
        b & 1 &  &  &  \\
        2c & b & 1 &  &  \\
         & c & b & 1 &  \\
        & & c & b & \ddots   \\
         &  &  & \ddots & \ddots \\
      \end{array}
    \right).
\end{equation}
It is known \cite[Theorem 2.3]{CLW15rm} that
if the matrix $\JM$ is TP$_2$,
then the sequence $(T_n(b,c))_{n\ge 0}$ is log-convex.
On the other hand,
the tridiagonal matrix $\JM$ is TP$_2$ if and only if $b^2\ge 2c$
\cite[Proposition 2.6 (i)]{CLW15ra}.
Thus $b^2\ge 2c$ implies that $(T_n(b,c))_{n\ge 0}$ is log-convex.

Conversely, if $(T_n(b,c))_{n\ge 0}$ is log-convex,
then in particular, we have $T_1(b,c)T_3(b,c)\ge T^2_2(b,c)$,
which is equivalent to $b^2\ge 2c$.

(ii)\quad
It is well known that a sequence $(a_k)_{k\ge 0}$ is SM if and only if there exists an integer $m\ge 0$ such that
$$\det[a_{i+j}]_{0\le i,j\le n}>0,\qquad \det[a_{i+j+1}]_{0\le i,j\le n}>0$$
for $0\le n<m$, and
$$\det[a_{i+j}]_{0\le i,j\le n}=\det[a_{i+j+1}]_{0\le i,j\le n}=0$$
for $n\ge m$ (see \cite[Theorem 1.3]{ST43} for instance).
Now
$\det[T_{i+j}(b,c)]_{0\le i,j\le n}=2^nc^{\binom{n+1}{2}}$
and $$\det[T_{i+j+1}(b,c)]_{0\le i,j\le n}=2^nc^{\binom{n+1}{2}}u_{n},$$
where $u_{n}$ is the $(n+1)$th principal leading minor of the matrix $\JM$
(see \cite[\S 8, Result 2]{Aig01} for instance).
It follows that $(T_n(b,c))_{n\ge 0}$ is SM if and only if $u_{n}$ are positive for all $n\ge 0$.
On the other hand,
an irreducible nonnegative tridiagonal matrix is TP
if and only if all its leading principal minors are positive
(see \cite[Example 2.2, p. 149]{Min88} for instance),
and the tridiagonal matrix $\JM$ is TP if and only if $b^2\ge 4c$
\cite[Proposition 2.6 (ii)]{CLW15ra}.
Thus $(T_n(b,c))_{n\ge 0}$ is SM if and only if $b^2\ge 4c$.
\end{proof}

\begin{rem}
When $b^2\ge 4c$, the sequence $(T_n(b,c))_{n\ge 0}$ is SM.
However, we do not know how to obtain the associated measure and whether it is discrete or continuous.

The central binomial coefficients $\binom{2n}{n}=T_n(2,1)$
and the central Delannoy numbers $D_n=T_n(3,2)$ are SM respectively.
See \cite{LMW16,WZ16,Zhu13} also.
\end{rem}

We have seen that if $b,c>0$ and $b^2\ge 2c$,
then $(T_n(b,c))_{n\ge 0}$ is log-convex,
i.e.,
$T_{i-1}(b,c)T_{j+1}(b,c)-T_{i}(b,c)T_{j}(b,c)$ are nonnegative for $j\ge i\ge 1$.
Actually, we have the following stronger result.

\begin{thm}\label{T-li}
Let $j\ge i\ge 1$. Then
$$T_{i-1}(b,c)T_{j+1}(b,c)-T_{i}(b,c)T_{j}(b,c)
=b^{\frac{1-(-1)^{i+j}}{2}}f_{ij}(b^2-2c,c),$$
where $f_{ij}(x,y)$ is a polynomial in $x$ and $y$ with nonnegative integer coefficients.
\end{thm}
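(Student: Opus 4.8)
The plan is to trade the product $\Delta_{i,j}:=T_{i-1}(b,c)T_{j+1}(b,c)-T_i(b,c)T_j(b,c)$ for $2\times2$ minors of powers of the tridiagonal matrix $\JM$ of \eqref{j-tri}, and then to run an induction inside a class of polynomials tailored to the statement. Write $(\JM^{\,n})[I;K]$ for the submatrix of $\JM^{\,n}$ on rows $I$ and columns $K$. By Theorem \ref{tbc-ra}(i) (equivalently, by \eqref{rst-eq}) one has $T_{n,k}(b,c)=(\JM^{\,n})_{0,k}$, so $T_n(b,c)=(\JM^{\,n})_{0,0}$. Put $m:=j-i+1\ (\ge1)$. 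Factoring $\JM^{\,j+1}=\JM^{\,i}\JM^{\,m}$ and $\JM^{\,j}=\JM^{\,i-1}\JM^{\,m}$, expanding the resulting $(0,0)$ entries, and simplifying by means of $(\JM^{\,i})_{0,\bullet}=b\,(\JM^{\,i-1})_{0,\bullet}+(\JM^{\,i-1})_{1,\bullet}$ (one row operation inside a $2\times2$ determinant), one arrives at the finite sum
$$\Delta_{i,j}=\sum_{l\ge1}(\JM^{\,m})_{l,0}\cdot\det\!\big((\JM^{\,i-1})[\{0,1\};\{0,l\}]\big).$$

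Next, let $\mathcal{P}$ be the set of polynomials of the form $b^{\varepsilon}Q(b^{2}-2c,\,c)$ where $\varepsilon\in\{0,1\}$ and $Q$ is a two-variable polynomial with nonnegative integer coefficients. Because $b^{2}=(b^{2}-2c)+2c$, the set $\mathcal{P}$ is closed under products and under sums of members sharing the same parity in $b$. Since every entry of $\JM$ belongs to $\{0,1,b,c,2c\}$, each entry of $\JM^{\,m}$ has nonnegative integer coefficients, and an easy parity count shows that all monomials of $(\JM^{\,m})_{l,0}$ have $b$-exponent $\equiv m+l\pmod2$; hence $(\JM^{\,m})_{l,0}\in\mathcal{P}$. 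The crux is the claim
$$D^{(a,a')}_{n}(l):=\det\!\big((\JM^{\,n})[\{a,a'\};\{0,l\}]\big)\in\mathcal{P}\qquad\text{for all }0\le a<a',\ l\ge1,\ n\ge0,$$
together with the parity refinement that the exponent of $b$ occurring in $D^{(a,a')}_{n}(l)$ is $\equiv a+a'+l\pmod2$. Granting this, every summand of the displayed formula has $b$-exponent of parity $\equiv(m+l)+(0+1+l)\equiv i+j$, so $\Delta_{i,j}=b^{\,(i+j)\bmod2}f_{ij}(b^{2}-2c,c)$ with $f_{ij}$ of nonnegative integer coefficients, and $(i+j)\bmod2=\tfrac{1-(-1)^{i+j}}{2}$, as desired.

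To prove the claim I would induct on $n$. For $n=0$, $D^{(a,a')}_{0}(l)=[a=0][a'=l]\in\{0,1\}\subseteq\mathcal{P}$. For the inductive step, Cauchy--Binet applied to $\JM^{\,n}=\JM\cdot\JM^{\,n-1}$ gives the finite sum
$$D^{(a,a')}_{n}(l)=\sum_{p<q}\det\!\big(\JM[\{a,a'\};\{p,q\}]\big)\cdot D^{(p,q)}_{n-1}(l),$$
so it suffices that every $2\times2$ minor of $\JM$ lies in $\mathcal{P}$. As $\JM$ is tridiagonal with superdiagonal $1$'s, diagonal $b$, and subdiagonal $c_1=2c,\,c_2=c_3=\cdots=c$, a short case analysis shows that the only $2\times2$ minors in which a subtraction survives are those on rows $\{a,a+1\}$ and columns $\{a,a+1\}$, which equal $b^{2}-c_{a+1}$, that is, $b^{2}-2c$ when $a=0$ and $(b^{2}-2c)+c$ when $a\ge1$ — and this is exactly where the variables $b^{2}-2c$ and $c$ enter — while every other $2\times2$ minor of $\JM$ is a single product of two of its entries and hence also lies in $\mathcal{P}$. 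A last parity count (using that $(\JM^{\,n})_{a,b}$ has all $b$-exponents $\equiv n+a+b\pmod2$) shows the summands above all carry the same parity in $b$, so the inductive hypothesis and the closure properties of $\mathcal{P}$ complete the step.

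The main obstacle is precisely this bookkeeping: verifying that the $2\times2$ minors of $\JM$ are the only source of cancellation anywhere in the recursion, and keeping the parities in $b$ aligned so that the final sum for $\Delta_{i,j}$ never mixes a $b^{0}$ with a $b^{1}$ term. Once this is arranged, nonnegativity of the coefficients of $f_{ij}$ is automatic.
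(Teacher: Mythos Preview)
Your argument is correct. The core engine is the same as the paper's: the $2\times2$ minors of the tridiagonal matrix $\JM$ all lie in the class $\mathcal{P}$ (with the crucial consecutive principal minors $b^2-2c$ and $b^2-c=(b^2-2c)+c$), this class is closed under products and same-parity sums, and Cauchy--Binet propagates membership in $\mathcal{P}$ through matrix products.

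Where you differ is in the packaging. The paper formalizes the invariant as a property of matrices (``NICE''), proves it is preserved under products, and then applies it to the Hankel factorization $\HM=\Tbc\cdot\DM\cdot[\Tbc]^t$ from Lemma~\ref{ft-rm}; thus the paper shows the whole Hankel matrix is NICE and reads off $\Delta_{i,j}$ from its first two rows. You instead derive the explicit identity
\[
\Delta_{i,j}=\sum_{l\ge1}(\JM^{\,m})_{l,0}\cdot\det\bigl((\JM^{\,i-1})[\{0,1\};\{0,l\}]\bigr),\qquad m=j-i+1,
\]
and show directly that powers $\JM^{\,n}$ have all their $2\times2$ minors in $\mathcal{P}$ via the induction $\JM^{\,n}=\JM\cdot\JM^{\,n-1}$. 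The paper's route is more modular (the NICE lemmas are reusable, and the Hankel decomposition is already available), while your route is a bit more self-contained and makes the dependence on $\JM$ transparent without passing through $\Tbc$ or the diagonal factor $\DM$. Either way, the substance --- that the only genuine cancellations occur in the adjacent principal $2\times2$ minors of $\JM$, and these are exactly $b^2-2c$ and $(b^2-2c)+c$ --- is identical.
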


To prove Theorem \ref{T-li},
we introduce some notations for convenience.
Let $\AM=[a_{ij}]_{i,j\ge 0}$ be a (finite or infinite) matrix.
We use
$
\AM\left(
\begin{array}{cc}
i_0,i_1 \\
j_0,j_1
\end{array}\right)
$
to denote the minor of order $2$ of $\AM$
determined by the rows indexed $i_1>i_0\ge 0$
and columns indexed $j_1>j_0\ge 0$.
Suppose that all entries of $\AM$ are polynomials in $b$ and $c$.
We say that $\AM$ is \n\ if there exist two polynomials $f(x,y)$ and $g(x,y)$ in $x$ and $y$ with nonnegative integer coefficients
such that
$$\AM\left(
\begin{array}{cc}
i_0,i_1 \\
j_0,j_1
\end{array}\right)
=
\left\{
  \begin{array}{rl}
   f(b^2-2c,c), & \hbox{if $i_0+i_1+j_0+j_1\equiv 0 \pmod 2$;} \\
   b\cdot g(b^2-2c,c), & \hbox{if $i_0+i_1+j_0+j_1\equiv 1 \pmod 2$.}
  \end{array}
\right.$$

\begin{lem}\label{lem-ab}
The product of an arbitrary number of \n\ matrices is still \n.
\end{lem}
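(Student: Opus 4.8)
The plan is to reduce to a product of two factors and then expand the order-two minors of the product by the Cauchy--Binet formula. First I would note that it suffices to prove that $\AM\BM$ is \n\ whenever both $\AM$ and $\BM$ are; the general statement then follows by induction on the number of factors, the case of one factor being trivial (and the empty product is the identity matrix, whose only nonzero order-two minors equal $1$ and have even index sum, hence is \n). Throughout I would assume, as holds for every matrix occurring in our applications --- triangular matrices, diagonal matrices, and transposes of triangular matrices --- that the matrix products considered are well defined, so that each entry of $\AM\BM$ is a finite sum of products of polynomials in $b$ and $c$ (hence again such a polynomial) and so that, for $i_1>i_0\ge0$ and $j_1>j_0\ge0$, the Cauchy--Binet identity
\begin{equation*}
(\AM\BM)\left(\begin{array}{cc} i_0,i_1 \\ j_0,j_1 \end{array}\right)
=\sum_{0\le k_0<k_1}\AM\left(\begin{array}{cc} i_0,i_1 \\ k_0,k_1 \end{array}\right)\BM\left(\begin{array}{cc} k_0,k_1 \\ j_0,j_1 \end{array}\right)
\end{equation*}
holds with a finite sum on the right.

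The heart of the proof is a termwise parity bookkeeping. Write $\N[b^2-2c,c]$ for the set of polynomials in $b^2-2c$ and $c$ with nonnegative integer coefficients; it is closed under addition and multiplication, and, crucially, $b^2=(b^2-2c)+2c\in\N[b^2-2c,c]$, whence $b^2\cdot\N[b^2-2c,c]\subseteq\N[b^2-2c,c]$. Fix a term of the sum and set $\epsilon\equiv i_0+i_1+k_0+k_1$ and $\eta\equiv k_0+k_1+j_0+j_1$ modulo $2$; since the indices $k_0,k_1$ occur in both, $\epsilon+\eta\equiv i_0+i_1+j_0+j_1\pmod 2$. By the definition of \n, the $\AM$-factor lies in $\N[b^2-2c,c]$ if $\epsilon=0$ and in $b\cdot\N[b^2-2c,c]$ if $\epsilon=1$, and likewise for the $\BM$-factor with $\eta$ in place of $\epsilon$. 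Multiplying, the product lies in $\N[b^2-2c,c]$ when $\epsilon=\eta=0$, in $b\cdot\N[b^2-2c,c]$ when exactly one of $\epsilon,\eta$ equals $1$, and in $b^2\cdot\N[b^2-2c,c]\subseteq\N[b^2-2c,c]$ when $\epsilon=\eta=1$ --- in every case the outcome is governed by the parity of $\epsilon+\eta$, that is, of $i_0+i_1+j_0+j_1$. (This is exactly why the definition of \n\ is framed in the variable $b^2-2c$ rather than $b^2$: it makes the ``odd $\times$ odd'' case close up.)

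Since all summands therefore lie in $\N[b^2-2c,c]$, or all in $b\cdot\N[b^2-2c,c]$, according to whether $i_0+i_1+j_0+j_1$ is even or odd, and these sets are closed under finite sums, the minor $(\AM\BM)\left(\begin{array}{cc} i_0,i_1 \\ j_0,j_1 \end{array}\right)$ has precisely the form prescribed in the definition; hence $\AM\BM$ is \n, and the induction completes the proof. I do not anticipate a genuine obstacle: the whole content is the closure of $\N[b^2-2c,c]$ under the operations produced by Cauchy--Binet, together with the bookkeeping slogan ``odd times odd yields an even index sum with a spare $b^2$''. The only points deserving a little care are the legitimacy of the Cauchy--Binet expansion (guaranteed by triangularity or finiteness in all cases of interest) and the elementary congruence $\epsilon+\eta\equiv i_0+i_1+j_0+j_1\pmod 2$.
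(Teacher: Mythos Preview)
Your proof is correct and follows essentially the same approach as the paper: reduce to two factors, apply Cauchy--Binet, and use the parity congruence $\epsilon+\eta\equiv i_0+i_1+j_0+j_1\pmod 2$ together with the key observation $b^2=(b^2-2c)+2c$ to absorb the odd-times-odd case. The only differences are cosmetic---you introduce the notation $\mathbb{N}[b^2-2c,c]$ and make explicit the well-definedness of the infinite products, points the paper leaves implicit.
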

\begin{proof}
By the associativity of matrix product,
it suffices to show that the product of two \n ~matrices is still \n.
Recall that the classical Cauchy-Binet formula
\begin{equation}\label{cb-f}
(\AM \BM)\binom{i_0,i_1}{j_0,j_1}=
\sum_{k_0<k_1}
\left[\AM\binom{i_0,i_1}{k_0,k_1}\cdot
\BM\binom{k_0,k_1}{j_0,j_1}\right].
\end{equation}
If $i_0+i_1+j_0+j_1\equiv 0 \pmod 2$,
then $i_0+i_1+k_0+k_1\equiv k_0+k_1+j_0+j_1 \pmod 2$.
Suppose that both $\AM$ and $\BM$ are \n.
Then each term
in the sum on the right hand side of \eqref{cb-f}
$$\AM\binom{i_0,i_1}{k_0,k_1}\cdot
\BM\binom{k_0,k_1}{j_0,j_1}=
\left\{
  \begin{array}{rl}
   f(b^2-2c,c), & \hbox{if $i_0+i_1+k_0+k_1\equiv 0 \pmod 2$;} \\
   b^2\cdot g(b^2-2c,c), & \hbox{if $i_0+i_1+k_0+k_1\equiv 1 \pmod 2$,}
  \end{array}
\right.$$
which is a polynomial in $b^2-2c$ and $c$ with nonnegative integer coefficients,
and so is the whole sum on the right hand side of \eqref{cb-f}.

Similarly,
if $i_0+i_1+j_0+j_1\equiv 1 \pmod 2$,
then each term in the sum on the right hand side of \eqref{cb-f} is form of $b\cdot g(b^2-2c,c)$,
and so is the whole sum on the left hand side of \eqref{cb-f}.
Thus we conclude that the product $\AM \BM$ is \n\ by the definition.
\end{proof}

\begin{lem}\label{lem-jcb}
The tridiagonal matrix $\JM$ is \n.
\end{lem}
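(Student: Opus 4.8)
The plan is to read off all $2\times 2$ minors of $\JM$ directly from its tridiagonal shape. Fix indices $0\le i_0<i_1$ and $0\le j_0<j_1$ and write $J_{m,n}$ for the $(m,n)$ entry of $\JM$, so that $J_{m,m}=b$, $J_{m,m+1}=1$, $J_{1,0}=2c$, $J_{m,m-1}=c$ for $m\ge 2$, and $J_{m,n}=0$ whenever $|m-n|\ge 2$. The minor in question is $\JM\binom{i_0,i_1}{j_0,j_1}=J_{i_0,j_0}J_{i_1,j_1}-J_{i_0,j_1}J_{i_1,j_0}$, and the goal is to show it has the shape $f(b^2-2c,c)$ or $b\,g(b^2-2c,c)$ according to the parity of $i_0+i_1+j_0+j_1$, with $f,g$ having nonnegative integer coefficients.

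First I would dispose of the ``corner'' minors. If $J_{i_0,j_1}J_{i_1,j_0}\neq 0$, then $|i_0-j_1|\le 1$ and $|i_1-j_0|\le 1$, whence $j_1\le i_0+1$ and $j_0\ge i_1-1$; together with $j_0<j_1$ and $i_0<i_1$ this forces $i_1=i_0+1$, $j_0=i_0$, $j_1=i_0+1$. The minor then equals $b^2-J_{i_0+1,i_0}$, which is $b^2-2c$ when $i_0=0$ and $b^2-c$ when $i_0\ge 1$; in both cases the index sum $i_0+i_1+j_0+j_1=4i_0+2$ is even, and $b^2-2c=f(b^2-2c,c)$ with $f(x,y)=x$, while $b^2-c=f(b^2-2c,c)$ with $f(x,y)=x+y$, both having nonnegative integer coefficients.

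In all remaining cases $J_{i_0,j_1}J_{i_1,j_0}=0$, so the minor reduces to the single product $J_{i_0,j_0}J_{i_1,j_1}$. This is $0$ — hence vacuously of the required form — unless $|i_0-j_0|\le 1$ and $|i_1-j_1|\le 1$, in which case, setting $j_0=i_0+p$ and $j_1=i_1+q$ with $p,q\in\{-1,0,1\}$, the minor is a product of two entries each lying in $\{b,1,c,2c\}$, while the index sum is $\equiv p+q\pmod 2$. I would then run through the (at most) nine choices of $(p,q)$: the case $(0,0)$ gives $b^2=(b^2-2c)+2c$; the four cases in which exactly one of $p,q$ is nonzero give $b$, $bc$ or $2bc$; the case $(1,1)$ gives the constant $1$; and the cases $(1,-1)$, $(-1,1)$, $(-1,-1)$ give $c$, $2c$, $c^2$ or $2c^2$ (the factor $2$ appearing exactly when the exceptional entry $J_{1,0}=2c$ occurs among the two factors). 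In each case the parity of $p+q$ matches the prescribed shape and the resulting polynomial in $b^2-2c$ and $c$ has nonnegative integer coefficients. Since every $2\times 2$ minor has now been accounted for, $\JM$ is \n.

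The argument is entirely routine; the only delicate point is the bookkeeping of the anomalous subdiagonal entry $J_{1,0}=2c$ and the verification that the index-sum parities line up with the two prescribed forms. This is in fact exactly why the definition of a \n\ matrix is phrased in terms of $b^2-2c$: both corner values $b^2-2c$ and $b^2-c=(b^2-2c)+c$ must come out with nonnegative integer coefficients, which is possible precisely because the anomalous entry is $2c$ and not a larger multiple of $c$.
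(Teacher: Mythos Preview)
Your argument is correct and is exactly the approach the paper intends: the paper's proof is a single sentence (``the statement follows from the definition by checking nonzero minors of order $2$''), and you have carried out that check in full detail, including the bookkeeping for the exceptional entry $J_{1,0}=2c$. Nothing more is needed.
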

\begin{proof}
The statement follows from the definition by checking nonzero minors of order $2$.
\end{proof}

\begin{lem}\label{lem-tcb}
The matrix $\Tbc$ is \n.
\end{lem}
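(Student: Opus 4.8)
The plan is to verify directly that every order-$2$ minor of $\Tbc$ has the shape required by the definition of a \n\ matrix, arguing by induction on the smaller of the two row indices. The starting point is Theorem \ref{tbc-ra}(i): $\Tbc$ is the recursive matrix attached to $\b=(b,b,b,\ldots)$ and $\c=(2c,c,c,\ldots)$, so by the recurrence \eqref{rst-eq} its $0$th row is $(1,0,0,\ldots)$ and, for every $n\ge 1$, its $n$th row is obtained from its $(n-1)$st row by right multiplication with the tridiagonal matrix $\JM$ of \eqref{j-tri}.

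First I would settle the base case $i_0=0$. Since the $0$th row of $\Tbc$ is $(1,0,0,\ldots)$ and we always take $j_0<j_1$, the minor $\Tbc\binom{0,i_1}{j_0,j_1}$ is $0$ unless $j_0=0$, and in that case it collapses to the single entry $T_{i_1,j_1}(b,c)$. By the explicit formula \eqref{tnk}, every monomial of $T_{n,k}(b,c)$ is a nonnegative integer times $b^{n-k-2j}c^{j}$, so the exponent of $b$ is congruent to $n-k$ modulo $2$. Substituting $b^{2}=(b^{2}-2c)+2c$, whose powers expand with nonnegative integer coefficients in $b^{2}-2c$ and $c$, exhibits $T_{n,k}(b,c)$ as a polynomial in $b^{2}-2c$ and $c$ with nonnegative integer coefficients when $n-k$ is even, and as $b$ times such a polynomial when $n-k$ is odd. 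Since $0+i_1+0+j_1=i_1+j_1\equiv i_1-j_1\pmod 2$, this is precisely the \n\ form.

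Then, for the inductive step, I would fix $i_0\ge 1$ and assume every order-$2$ minor of $\Tbc$ with smaller first row index is already \n. Because rows $i_0$ and $i_1$ of $\Tbc$ are rows $i_0-1$ and $i_1-1$ right multiplied by $\JM$, applying the Cauchy--Binet formula \eqref{cb-f} to the $2\times\infty$ block of $\Tbc$ on rows $\{i_0-1,i_1-1\}$ times the $\infty\times 2$ block of $\JM$ on columns $\{j_0,j_1\}$ yields
\begin{equation*}
\Tbc\binom{i_0,i_1}{j_0,j_1}=\sum_{k_0<k_1}\Tbc\binom{i_0-1,i_1-1}{k_0,k_1}\,\JM\binom{k_0,k_1}{j_0,j_1},
\end{equation*}
a finite sum, since $\JM$ is tridiagonal and so $\JM\binom{k_0,k_1}{j_0,j_1}$ vanishes for all but finitely many pairs $k_0<k_1$. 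Each $\Tbc\binom{i_0-1,i_1-1}{k_0,k_1}$ is \n\ by the induction hypothesis, each $\JM\binom{k_0,k_1}{j_0,j_1}$ is \n\ by Lemma \ref{lem-jcb}, and $\bigl((i_0-1)+(i_1-1)+k_0+k_1\bigr)+\bigl(k_0+k_1+j_0+j_1\bigr)\equiv i_0+i_1+j_0+j_1\pmod 2$; so, reasoning exactly as in the proof of Lemma \ref{lem-ab} (again using $b^{2}=(b^{2}-2c)+2c$ to absorb a stray factor $b^{2}$), each summand — and hence the whole finite sum — is a polynomial in $b^{2}-2c$ and $c$ with nonnegative integer coefficients, or $b$ times such a polynomial, according to the parity of $i_0+i_1+j_0+j_1$. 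This closes the induction.

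Once the Cauchy--Binet reduction to the production matrix $\JM$ is set up, the argument is largely routine, and I expect the only points needing care to be the base case — observing that the $0$th row of $\Tbc$ forces the minor down to a single entry $T_{i_1,j_1}(b,c)$, whose nonnegativity in $b^{2}-2c$ and $c$ is supplied by $b^{2}=(b^{2}-2c)+2c$ — and the parity bookkeeping, which must line up so that the argument of Lemma \ref{lem-ab} applies without change.
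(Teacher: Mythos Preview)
Your proof is correct and rests on the same engine as the paper's---the production-matrix identity (row $n$ of $\Tbc$ equals row $n-1$ times $\JM$) combined with Cauchy--Binet---but you organize the induction differently. The paper inducts on the size $n$ of the leading principal submatrix, writing $\LM_{n+1}=\LM_n^*\cdot\JM_n^*$ and invoking Lemma~\ref{lem-ab} as a black box; you instead induct on the smaller row index $i_0$ and apply Cauchy--Binet directly to the two rows in play, effectively unwinding Lemma~\ref{lem-ab} inside the step.

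The main substantive difference is your base case. The paper's base case is trivial ($\LM_0,\LM_1$ have no interesting minors), but its inductive step tacitly assumes that passing from $\LM_n$ to $\LM_n^*$ (and from $\JM_n$ to $\JM_n^*$) preserves the \n\ property; checking this requires knowing that the individual entries $T_{n,k}(b,c)$ themselves have the correct parity form in $b^2-2c$ and $c$, since a $2\times 2$ minor of $\LM_n^*$ through row~$0$ and column~$0$ collapses to a single entry of $\LM_n$. Your base case $i_0=0$ supplies exactly this computation via \eqref{tnk} and the substitution $b^2=(b^2-2c)+2c$, so your write-up is arguably more self-contained at that point. Conversely, the paper's packaging via Lemma~\ref{lem-ab} is cleaner once that lemma is in hand and makes the structure (``\n\ is closed under products'') reusable.
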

\begin{proof}
Clearly, an infinite matrix is \n\ if and only if all leading principal submatrices are \n.
Let $\LM_n=[T_{n,k}(b,c)]_{0\le i,j\le n}$ be the $n$th leading principal submatrix of $\Tbc$.
Then it suffices to show that $\LM_n$ are \n\ for all $n\ge 0$.
We proceed by induction on $n$.
We have $|\LM_0|=|\LM_1|=1$.
Assume that $\LM_n$ is \n.
Denote
$$
\LM_n^*=\left[\begin{array}{cc}1 & O\\ O & \LM_n\\\end{array}\right]
$$
and
$$\JM^*_n
=\left[
  \begin{array}{c}
    e_n \\
    \JM_n \\
  \end{array}
\right],
$$
where $e_n=(1,0,\ldots,0)$ and $\JM_n$ is the $n$th leading principal submatrix of $\JM$.
Then
$$\JM^*_n
=
\left[
 \begin{array}{cccccc}
 1 & &  &  &  & \\
 b & 1 &  &  &  & \\
 2c & b & 1 &  &  & \\
 & c & b & \ddots &  & \\
 &  & \ddots & \ddots & 1 & \\
 &  &  & c & b & 1 \\
 \end{array}\right],
$$
and we can rewrite \eqref{tbc-d0} and \eqref{tbc-d1} as
$$\LM_{n+1}=\LM_n^*\cdot \JM^*_n.$$
By the assumption, $\LM_n$ is \n, and so $\LM^*_n$ is \n.
By Lemma \ref{lem-jcb}, $\JM$ is \n,
and so is the leading principal submatrix $\JM_n$,
as well as the matrix $\JM^*_n$.
The product $\LM_{n+1}$ is therefore \n.
Thus all leading principal submatrices of $\Tbc$ are \n, as required.
\end{proof}

We are now in a position to prove Theorem~\ref{T-li}.

\begin{proof}[Proof of Theorem~\ref{T-li}]
Let $\HM=[T_{n+k}(b,c)]_{n,k\ge 0}$ be the Hankel matrix of the Catalan-like numbers $(T_n(b,c))_{n\ge 0}$.
Then we need to show that the submatrix of $\HM$ determined by the first two rows is \n.
We do this by showing that the matrix $\HM$ is \n.

We have shown that
$T_n(b,c)$ are the Catalan-like numbers corresponding to
$\b=(b,b,b,\ldots)$ and $\c=(2c,c,c,\ldots)$.
Hence the Hankel matrix
\begin{equation}\label{tbc-snf}
  \HM=\Tbc\cdot \DM\cdot [\Tbc]^t
\end{equation}
by \eqref{snf}.
Clearly, the diagonal matrix $\DM=\mathrm{diag}(1,2c,2c^2.2c^3,\ldots)$ is \n.
By Lemma \ref{lem-tcb}, the matrix $\Tbc$ is \n, so is its transpose $[\Tbc]^t$.
Thus the product $\HM$ is \n\ by Lemma \ref{lem-ab}, as desired.
\end{proof}

\begin{coro}\label{tt-t2}
These exists a monic polynomial $f_n(x)$ of degree $n$ with nonnegative integer coefficients
such that
$$T_{n}(b,c)T_{n+2}(b,c)-T^2_{n+1}(b,c)=2c^{n+1}f_n\left(\frac{b^2-2c}{c}\right).$$
\end{coro}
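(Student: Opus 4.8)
The plan is to recognize the left-hand side as a minor of order $2$ of the Hankel matrix $\HM=[T_{i+j}(b,c)]_{i,j\ge 0}$ and to exploit the factorization \eqref{tbc-snf} together with the fact (Lemma~\ref{lem-tcb}) that $\Tbc$ is \n. Concretely,
$$T_{n}(b,c)T_{n+2}(b,c)-T^2_{n+1}(b,c)=\HM\binom{0,1}{n,n+1},$$
since $\HM_{0,n}=\HM_{1,n-1+1}$ etc. give exactly this $2\times 2$ determinant. Writing $\HM=\Tbc\,\DM\,[\Tbc]^t$ with $\DM=\mathrm{diag}(1,2c,2c^2,\ldots)$ and applying the Cauchy-Binet formula \eqref{cb-f}, the first two rows of $\Tbc$ are $(1,0,0,\ldots)$ and $(b,1,0,\ldots)$, so the only column pair $(k_0,k_1)$ for which $\Tbc\binom{0,1}{k_0,k_1}\neq 0$ is $(0,1)$, with value $1$; using also that $\DM$ is diagonal, the whole sum collapses to
$$T_{n}(b,c)T_{n+2}(b,c)-T^2_{n+1}(b,c)=\delta_0\delta_1\cdot\Tbc\binom{n,n+1}{0,1}=2c\cdot\bigl(T_{n,0}T_{n+1,1}-T_{n+1,0}T_{n,1}\bigr),$$
where $T_{n,k}=T_{n,k}(b,c)$. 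This already extracts the factor $2c$, so it remains to show that $g_n(b,c):=\Tbc\binom{n,n+1}{0,1}$ equals $c^{\,n}f_n\bigl((b^2-2c)/c\bigr)$ for a monic $f_n$ of degree $n$ with nonnegative integer coefficients.

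Next I would combine two structural facts about $g_n$. First, by Lemma~\ref{lem-tcb} the matrix $\Tbc$ is \n, and since the index sum $n+(n+1)+0+1$ is even, $g_n=h(b^2-2c,c)$ for some $h$ in $x,y$ with nonnegative integer coefficients. Second, \eqref{tnk} shows that $T_{n,k}(b,c)$ is isobaric (weighted-homogeneous) of weight $n-k$ when $b$ is given weight $1$ and $c$ weight $2$; hence $g_n$ is isobaric of weight $2n$. Substituting $(b,c)\mapsto(\lambda b,\lambda^2 c)$ gives the polynomial identity $h(\lambda^2(b^2-2c),\lambda^2c)=\lambda^{2n}h(b^2-2c,c)$; since the image of $(b,c)\mapsto(b^2-2c,c)$ is Zariski-dense (over $\mathbb{C}$ every value is a square), this forces $h$ to be homogeneous of degree $n$. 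Writing $h(x,y)=\sum_{j=0}^{n}a_jx^jy^{n-j}$ with $a_j$ nonnegative integers then yields $g_n(b,c)=c^{\,n}f_n\bigl((b^2-2c)/c\bigr)$ with $f_n(x)=h(x,1)=\sum_{j=0}^{n}a_jx^j$, which automatically has nonnegative integer coefficients and degree at most $n$.

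Finally I would pin down that $f_n$ is monic of degree exactly $n$, i.e.\ $a_n=1$, by specializing $c=0$: from \eqref{tnk} one has $T_{n,k}(b,0)=\binom{n}{k}b^{n-k}$, so $g_n(b,0)=b^n\cdot(n+1)b^n-b^{n+1}\cdot nb^{n-1}=b^{2n}$, and on the other hand $g_n(b,0)=(b^2)^{n}\cdot a_n=a_nb^{2n}$; hence $a_n=1$. Chaining the displays gives $T_{n}(b,c)T_{n+2}(b,c)-T^2_{n+1}(b,c)=2c\cdot c^{\,n}f_n\bigl((b^2-2c)/c\bigr)$, which is the claimed identity. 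I expect the only genuinely delicate point to be the argument in the middle paragraph: Lemma~\ref{lem-tcb} only supplies the \emph{existence} of a two-variable integer polynomial $h$, and it is the extra weighted homogeneity—together with the density remark needed to pass from the scaling identity in $(b,c)$ to ordinary homogeneity of $h(x,y)$—that is responsible both for clearing the power $c^{\,n+1}$ and for bounding $\deg f_n$ by $n$; the Cauchy-Binet collapse and the $c=0$ evaluation are routine.
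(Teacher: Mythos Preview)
Your proof is correct and follows essentially the same approach as the paper: both reduce $\Delta_n$ to $2c\cdot\Tbc\binom{n,n+1}{0,1}$ via the Hankel factorization \eqref{tbc-snf} and Cauchy--Binet, then use the \n\ property of $\Tbc$ for nonnegativity, homogeneity in $(b^2,c)$ for the shape $c^n f_n((b^2-2c)/c)$, and the leading-term computation for monicity. The only cosmetic difference is that the paper reads off the form $\sum_k a_k b^{2n-2k}c^k$ directly from \eqref{tnk} and changes basis, whereas you package the same content as weighted homogeneity plus a density argument; your $c=0$ specialization is exactly the paper's computation of $a_0=1$.
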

\begin{proof}
Denote $\Delta_n=T_{n}(b,c)T_{n+2}(b,c)-T^2_{n+1}(b,c)$.
Then
$$
\Delta_n
=\HM\left(
    \begin{array}{cc}
      0, & 1 \\
      n, & n+1 \\
    \end{array}
  \right),
$$
where $\HM=[T_{n+k}(b,c)]_{n,k\ge 0}$.
By \eqref{tbc-snf} and the Cauchy-Binet formula,
$$
\Delta_n
=2c\cdot \Tbc\left(
    \begin{array}{cc}
      n, & n+1 \\
      0, & 1 \\
    \end{array}
  \right)
=2c\left[T_{n,0}(b,c)T_{n+1,1}(b,c)-T_{n,1}(b,c)T_{n+1,0}(b,c)\right].
$$
By \eqref{tnk}, we have
$$\Delta_n=2c\sum_{k=0}^na_kb^{2n-2k}c^k,$$
where all $a_k$ are integers.
In particular, $a_0=1$.
Thus
$$\Delta_n=2c\sum_{k=0}^na'_k(b^2-2c)^{n-k}c^k,$$
where all $a'_k$ are nonnegative integers by Theorem \ref{T-li},
and $a'_0=1$.
Define $f_n(x)=\sum_{k=0}^na'_kx^{n-k}$.
Then
$$\Delta_n=2c^{n+1}f_n\left(\frac{b^2-2c}{c}\right),$$
as desired.
\end{proof}

\section{Remarks}

The generalized Motzkin numbers $M_n(b,c)$ are closely related to the generalized central trinomial coefficients $T_n(b,c)$
and they possess many similar properties.
Recall that
$$
M_n(b,c)=\sum_{k=0}^{\lrf{n/2}}\frac{n!}{k!(k+1)!(n-2k)!}b^{n-2k}c^k
$$
and
$$
T_n(b,c)=\sum_{k=0}^{\lrf{n/2}}\frac{n!}{k!k!(n-2k)!}b^{n-2k}c^k.
$$
Then we have
\begin{enumerate}[\rm (i)]
  \item
  $\frac{\partial \left(cM_{n}(b,c)\right)}{\partial c}=T_n(b,c)$;
  \item
  $\frac{\partial T_{n}(b,c)}{\partial b}=nT_{n-1}(b,c)$ and $\frac{\partial M_{n}(b,c)}{\partial b}=nM_{n-1}(b,c)$;
  \item
  $\sum_{k=0}^n\binom{n}{k}M_k(b,c)a^{n-k}=M_n(a+b,c)$;
  \item
  $M_{i-1}(b,c)M_{j+1}(b,c)-M_{i}(b,c)M_{j}(b,c)=b^{\frac{1-(-1)^{i+j}}{2}}g_{ij}(b^2-c,c)$ for $j\ge i\ge 1$,
  where $g_{ij}(x,y)$ are polynomials in $x$ and $y$ with nonnegative integer coefficients;
  \item
  $M_{n}(b,c)M_{n+2}(b,c)-M^2_{n+1}(b,c)=c^{n+1}g_n\left(\frac{b^2-c}{c}\right)$,
  where $g_n(x)$ is a monic polynomial of degree $n$ with nonnegative integer coefficients.
\end{enumerate}

We omit the details for brevity.

\section*{Declaration of competing interest}

The authors declare no conflict of interest.

\section*{Acknowledgements}
The authors thank the anonymous referee
for valuable suggestions on improving the presentation of this paper.
This work was partially supported by the National Natural Science Foundation of China
(Grant Nos. 12001301 and 12171068).

\section*{References}

\end{document}